\documentclass[12pt]{article}% Math and Physical Sciences Reference Style

\usepackage{authblk} % \affil
\usepackage{hyperref} % ref in PDF
\usepackage{dsfont}
\usepackage[T1]{fontenc}
\usepackage{amsmath}
\usepackage{amsfonts,amssymb}
\usepackage{extarrows}
\usepackage{geometry}
\usepackage{enumerate}
\usepackage{enumitem}

\usepackage[utf8]{inputenc}
\def\0{\emptyset}

\newtheorem{theorem}{Theorem}[section]
\newtheorem{problem}{Problem}[section]

\newtheorem{claim}[theorem]{Claim}

\newtheorem{conjecture}[theorem]{Conjecture}

\newenvironment{proof}{{\noindent\it Proof.}}{\hfill $\square$\par}
\usepackage{graphicx}

\usepackage{enumerate}

\usepackage{
	amsmath,			% Math Environments
	amssymb,			% Extended Symbols
	enumerate,		    % Enumerate Environments
	graphicx,			% Include Images
	lastpage,			% R\part{title}eference Lastpage
	multicol,			% Use Multi-columns
	multirow,			% Use Multi-rows
	pifont,			    % For Checkmarks
}

\usepackage[numbers]{natbib}
\begin{document}

% --- PAPER INFO ---

\title{A generalization of Erd\H{o}s-Hajnal problem on paths with equal-degree endpoints}

\author[1]{\small\bf Xiamiao Zhao\thanks{E-mail: zxm23@mails.tsinghua.edu.cn}}
\author[1]{\small\bf Yichen Wang\thanks{\textit{corresponding author}: E-mail: wangyich22@mails.tsinghua.edu.cn}}
% \author[2]{\small\bf Ervin Gy\H{o}ri\thanks{E-mail:  gyori.ervin@renyi.hu}}
\author[1]{\small\bf Mei Lu\thanks{E-mail: lumei@tsinghua.edu.cn}}

\affil[1]{\small Department of Mathematical Sciences, Tsinghua University, Beijing, P.R. China.}

%\author{Yichen Wang}
\date{}

\maketitle

\begin{abstract}
Erd\H{o}s and Hajnal proposed a problem that: is it true that every $(2n+1)$-vertex graph with $n^2+n+1$ edges contains two vertices of equal degree connected by a path of length three?
The edge bound is sharp by the complete bipartite graph $K_{n,n+1}$.
Recently, Chen and Ma [Journal of Combinatorial Theory, Series B, 179:1-18, 2026] answered this problem affirmatively for every $n \ge 600$.
In the same paper, they further conjectured that for sufficiently large $n$, the statement is true if we replace the path of length three by a path of fixed odd length.
In this paper, we confirm their conjecture.
\end{abstract}

%\textbf{AMS classification: }\textit{05C75, 05C65, 05C05}\vskip 0.3cm

{\bf Keywords:}  Erd\H{o}s-Hajnal conjecture, Degree sequence, Paths.
\vskip.3cm

% -------------------------------------------------------------
% --------- Here begins INTRODUCTION SECTION ------------------
\section{Introduction}

In 1991, Erd\H{o}s and Hajnal proposed the following problem.

\begin{problem}[Erd\H{o}s and Hajnal~\cite{erdos1991problems}]
Is it true that every $(2n+1)$-vertex graph with $n^2+n+1$ edges contains two vertices of equal degree connected by a path of length three?
\end{problem}

The problem is also listed as Problem \#816 in Thomas Bloom's collection of Erd\H{o}s problems\footnote{\url{https://www.erdosproblems.com/816}}.
Recently, Chen and Ma \cite{CHEN20261} answered this problem affirmatively for every $n \ge 600$.

\begin{theorem}[Chen and Ma \cite{CHEN20261}]
	Let $n \ge 600$. The unique $(2n+1)$-vertex graph with at least $n^2+n$ edges that does not contain two vertices of the same degree joined by a path of length three, is the complete bipartite graph $K_{n,n+1}$.
\end{theorem}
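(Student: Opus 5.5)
We argue by contradiction and assume that $G$ is a $(2n+1)$-vertex graph with $e(G)\ge n^2+n$ edges and no two equal-degree vertices joined by a path of length three. We will show that $G\cong K_{n,n+1}$. The plan has three stages: a structural lemma about degree classes, a counting argument showing that $G$ is nearly bipartite, and a stability-plus-cleaning step that recovers $K_{n,n+1}$ exactly.

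The starting point is the observation that, for each $d$, the set $V_d=\{v:d(v)=d\}$ has no two vertices joined by a $P_4$. We first prove a local lemma. If $u,v\in V_d$ are adjacent and $d\ge 3$, then the neighbourhoods $N(u)\setminus\{v\}$ and $N(v)\setminus\{u\}$ have no edges between them. Otherwise a path $u\,x\,y\,v$ with $x\in N(u)$, $y\in N(v)$ gives a forbidden $P_4$. Similarly, if $u,v\in V_d$ are non-adjacent, then no neighbour of $u$ is adjacent to a neighbour of $v$ by a path avoiding $u,v$. Consequently, within each degree class the neighbourhoods are pairwise ``separated''. Since the average degree is about $n$, many vertices have degree close to $n$, and large classes $V_d$ with $d\approx n$ would force $\Omega(n^2)$ non-edges among the neighbourhoods. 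We therefore aim to show that every class with $d\ge \varepsilon n$ has bounded size, or else that it spans an essentially bipartite configuration.

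The key counting step uses $\sum_v d(v)\ge 2n^2+2n$ with degrees at most $2n$, together with a pigeonhole argument over degree classes. Suppose the degree sequence is spread out, meaning that most classes have size at most $2$. Then the sum of the degrees is bounded by roughly $2(2n+(2n-1)+\dots)$ restricted to $2n+1$ vertices, which is too large to be impossible. So pigeonhole alone does not suffice. Instead we combine it with the separation lemma: two equal-degree vertices $u,v$ of degree about $n$ force $N(u)$ and $N(v)$ to be nearly disjoint or nearly equal, with no edges across. We then derive that $G$ contains a large induced bipartite-like structure. From this we apply a stability argument. Using the removal lemma, or directly the fact that $e(G)\ge n^2+n=\lfloor (2n+1)^2/4\rfloor$, it suffices to show that $G$ has few triangles. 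Equivalently, we show that many vertices lie in neighbourhoods that are independent sets. Füredi-type stability then gives a partition $A\cup B$ with $o(n^2)$ edges inside the parts.

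The final cleaning step shows that $|A|,|B|\in\{n,n+1\}$ and that no edges lie inside the parts. Here we examine the degrees. In $K_{n,n+1}$ all vertices in the part of size $n+1$ have degree $n$, and any $P_4$ between two of them would go $A\,B\,A\,B$, so its endpoints lie in different parts. Hence the extremal graph is valid. For a near-extremal $G$, suppose some edge lies inside $A$, or some cross edge is missing. Then the degrees inside $A$ remain heavily concentrated around a few values, since they differ from $|B|$ by a small defect, so two vertices $u,v\in A$ have equal degree. We then exhibit a path $u\,b\,a\,v$, or $u\,a'\,b\,v$ using the interior edge, by the near-completeness of the bipartite graph between $A$ and $B$. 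The main obstacle is this last step, specifically handling vertices with large defect, and it is also where the explicit bound $n\ge 600$ enters. We treat it by first deleting the $O(1)$ exceptional vertices, then showing that the remaining vertices of $A$ share a degree pattern, and finally tracing how the edge count $n^2+n$ forces zero defect.
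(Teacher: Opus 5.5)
Your proposal has a genuine gap at its core: the step from the hypothesis to ``$G$ is nearly bipartite'' is asserted rather than proved, and the heuristics offered for it are false.

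Your separation observation is correct. It is the length-three case of the paper's remark that $G[B_{12}]$, $G[B_1,B_{12}]$, $G[B_2,B_{12}]$, $G[B_1,B_2]$ contain no long paths; here they are simply empty. But for a pair $u,v$ of degree $\beta$ it forces only about $(\beta-\alpha)^2-\binom{b_{12}+1}{2}$ non-edges inside $N[u]\cup N[v]$, where $b_{12}=|N(u)\cap N(v)|$. This is below $n^2$ as soon as $b_{12}$ is moderately large, so no ``nearly disjoint or nearly equal'' dichotomy follows. The observation also says nothing about vertices outside $N(u)\cup N(v)$.

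More seriously, the premise that many vertices have degree near $n$, so that large degree classes exist, fails. Take a clique on $\{w,q_1,\dots,q_n\}$ and an independent set $\{p_1,\dots,p_n\}$, and join $p_i$ to exactly $q_i,\dots,q_n$. Then $d(q_i)=n+i$, $d(p_i)=n-i+1$ and $d(w)=n$, so:
\begin{itemize}
\item $e=n^2+n$;
\item the only equal-degree pair is $\{w,p_1\}$;
\item there are $\binom{n+1}{3}$ triangles;
\item the graph is $\Omega(n^2)$ edges away from bipartite.
\end{itemize}
It is excluded only by the single path $w\,q_2\,q_1\,p_1$. So ``few triangles'' cannot come from any soft argument about degree classes. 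Removal-lemma or F\"uredi-type stability has nothing to act on until this family and its perturbations are ruled out, and ruling them out is the actual content of the theorem. A removal-lemma route would also not yield anything like the explicit threshold $n\ge 600$.

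What is missing is a rule for choosing the right equal-degree pair, together with a way to exploit near-distinctness of degrees when no useful pair exists. The paper, following Chen and Ma, lets $\beta$ be the largest repeated degree and splits into two cases.
\begin{itemize}
\item If $\beta\le n$, all degrees above $n$ are distinct and one bounds $\sum_v d(v)$ directly. The example above shows this count is tight, so the path condition must be fed into it. The paper does this via the claim that $V(C)\cup\{u\}$ has pairwise distinct degrees.
\item If $\beta>n$, one takes $u_1,u_2$ of degree $\beta$, splits the rest into $B_1,B_2,B_{12},D$, and compares the forced non-edges with $|\mathcal{M}|\le n^2$. The deficit $\binom{b_{12}+1}{2}$ is recovered from $G[B_{12},D]$ and $G[D]$. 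Pigeonholing the values $d_D(v)$ over $v\in B_{12}$, using $b_{12}=d+2c+1$, gives a second pair $v_1,v_2$, whose neighbourhoods in $D$ are then separated in the same way.
\end{itemize}

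Your final cleaning step also needs repair. A path of length three between two vertices of the same part uses an odd number of interior edges. Hence a single interior edge $aa'$ of $A$ cannot join two \emph{other} vertices of $A$; your paths $u\,b\,a\,v$ and $u\,a'\,b\,v$ require $u$ or $v$ to be an endpoint of that edge. Such an edge should instead be used to join equal-degree vertices of the opposite part, as in $b\,a\,a'\,b'$. Equal-degree vertices on opposite sides are joined by all-cross paths in the near-complete regime. Finally, the treatment of high-defect vertices is named as the main obstacle but never carried out.
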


They actually proved a stronger result since the property of having two vertices of equal degree joined by a path of length three is not a monotone graph property.
Moreover, they proved the extremal graph is unique.
The bound $n \ge 600$ was later improved to $n \ge 2$ by Liu and Zeng~\cite{2025arXiv250500523L}.

Chen and Ma also conjectured that the statement is true if we replace the path of length three by a path of odd length.
\begin{conjecture}[Chen and Ma \cite{CHEN20261}]
	Let $\ell \ge 2$ be a fixed integer.
	For sufficiently large $n$, the maximum number of edges in a graph that does not contain two vertices of the same degree joined by a path of length $2\ell+1$, is at most $n^2+n$.
\end{conjecture}

Recently, Liu and Zeng~\cite{2026arXiv260411664L} confirmed the conjecture when $\ell = 2$ for every $n \ge 11$.
In this paper, we confirm their conjecture when $n$ is sufficiently large.
Actually, we have a stronger result, which shows that the $K_{n,n+1}$ is the unique extremal graph.

\begin{theorem}\label{thm: main 2n+1}
	Let $\ell \ge 3$ be a fixed integer.
	Let $G$ be a graph on $2n+1$ vertices with at least $n^2+n$ edges such that no two vertices with the same degree are connected by a path of length $2\ell+1$.
	When $n \ge f(\ell)$ where $f(\ell)$ is a polynomial function of $\ell$, we have that $e(G) = n^2+n$ and $G$ is the complete bipartite graph $K_{n,n+1}$.
\end{theorem}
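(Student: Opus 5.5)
The plan is a stability argument followed by a cleaning step.

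\textbf{Step 1: coarse structure.} Since $e(G)\ge n^2+n$ on $2n+1$ vertices, $G$ has density essentially $1/2$. Let $D_d$ be the set of vertices of degree $d$. By hypothesis, within each class $D_d$ no two vertices are joined by a path of length $2\ell+1$. For each $d$, let $G_d$ be the graph on $V(G)$ whose edges join endpoints of $P_{2\ell+2}$-copies in $G$. Then every class $D_d$ is independent in $G_d$.

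We first dispose of vertices of small degree. Delete iteratively any vertex of degree less than $(1/2-\varepsilon)n$. Each deletion loses fewer edges than the average, so only $O(\varepsilon n)$ vertices are deleted and we obtain a subgraph $G'$ of minimum degree at least $(1/2-\varepsilon)n$ that is still near-extremal.

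Next we show $G'$ contains no triangle-rich part. Suppose $G'$ contained many odd cycles that are short relative to the connectivity. Because $\ell$ is fixed and degrees are linear, long paths of every length are abundant, so vertices of equal degree would be joined by paths of all lengths and we would obtain a contradiction. To make this precise, I would use the pigeonhole principle on degrees. There are only $2n+1$ possible degree values, and vertices of degree near $n$ form large classes. Specifically, I expect to find $\Omega(n/\text{poly}(\ell))$ pairs of equal-degree vertices with many common neighbours or many $P_{2\ell}$-connections. Combined with the Erd\H{o}s--Stone stability theorem, where the forbidden structure comes from the non-bipartiteness that odd paths of every length force in dense non-bipartite graphs, this yields that $G$ is $o(n^2)$-close to $K_{n,n+1}$. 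I expect polynomial dependence on $\ell$ because I would use explicit path-embedding lemmas rather than regularity.

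\textbf{Step 2: refine to an exact bipartition.} Take a max-cut $A\cup B$. By Step 1 it has $o(n^2)$ edges inside the parts and $|A|,|B|=n\pm o(n)$. Call a vertex \emph{typical} if it is adjacent to all but $o(n)$ vertices on the other side. Between typical vertices, any two vertices $u\in A$, $v\in B$ are joined by paths of every odd length $2\ell+1$ avoiding any prescribed small set. Any two typical vertices on the same side are joined by paths of every even length, and additionally of odd length if there is any edge inside a part among typical vertices. Using an internal edge together with the abundant bipartite structure, we would show that the existence of an edge inside $A$ (or $B$) makes same-side vertices joined by paths of length $2\ell+1$. Hence typical vertices on the same side must all have distinct degrees. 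Since a side has about $n$ typical vertices with degrees in a window of width $o(n)$, this is impossible. Therefore no edge lies inside a part among typical vertices, and then by the max-cut property essentially no edges lie inside parts at all.

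\textbf{Step 3: exactness.} Once $G$ is bipartite, $e(G)\le |A||B|\le n(n+1)$, with equality only for $K_{n,n+1}$. The main obstacle, and where I would concentrate the effort, is Step 2 for \emph{atypical} vertices and the internal edges incident to them. A single edge $xy$ inside $A$, with $x$ atypical, can raise the edge count while creating only few odd connections. To handle this, I would do a case analysis on the degree of $x$. If $x$ has at least $\ell$ typical neighbours in $B$, route paths from $x$ through $y$ to get odd-length paths to many vertices of $A$. The degree-counting then forces some vertex of $A$ with the same degree as $x$ to be hit, using that degrees in $A$ cluster in a window of width $o(n)$ while the class sizes force collisions. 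Otherwise $x$ has low degree, and deleting $x$ together with its incident edges and inducting on $n$ (a comparison with $(n-1)^2+(n-1)$) yields a contradiction with $e(G)\ge n^2+n$.
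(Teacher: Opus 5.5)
There is a genuine gap. Your Step 1 carries essentially the whole difficulty of the theorem, and as written it has no working mechanism. The hypothesis is not an $H$-freeness condition: it is non-monotone, and not even hereditary, since $K_{n,n+1}$ minus a vertex of the larger side is $K_{n,n}$. It also constrains only pairs of vertices of equal degree. So a dense non-bipartite part of $G$ is harmless as long as its vertices have pairwise distinct degrees.

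This has three consequences for your outline.
\begin{itemize}
\item Erd\H{o}s--Stone--Simonovits stability has nothing to be applied to. The argument ``many odd cycles give equal-degree vertices joined by paths of all lengths'' presupposes that equal-degree pairs sit inside or next to the non-bipartite part, which is exactly what needs proof.
\item The low-degree deletion fails as a pure edge count. A clique on about $\sqrt{2}\,n$ vertices plus isolated vertices has at least $n^2+n$ edges, yet about $(2-\sqrt{2})n$ vertices get deleted. Getting $O(\varepsilon n)$ needs a Tur\'an-type bound $e(G')\le(1/4+o(1))|V(G')|^2$, which only the hypothesis could supply.
\item The induction in Step 3 is also invalid. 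Deleting $x$ changes its neighbours' degrees, so $G-x$ need not satisfy the hypothesis. Moreover $G-x$ has $2n$ rather than $2n-1$ vertices, and the statement is only claimed for $n\ge f(\ell)$, so there is no base case.
\end{itemize}
In the other subcase of Step 3, $x$ may have a unique degree, so nothing forces a vertex of the same degree to be hit. What actually works is different: an internal edge $xy$ whose ends both have typical neighbours across gives paths of length $2\ell+1$ between all pairs of typical vertices on one side. That reduces to your Step 2 pigeonhole, which is sound once near-bipartiteness is known.

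What your outline lacks is the paper's direct use of the degree sequence. Vertices with pairwise distinct degrees have degree sum at most an arithmetic series, and $2e(G)\ge 2n^2+2n$ is just beyond what nearly distinct degrees allow. The task is therefore to find large vertex sets on which the hypothesis forces distinct degrees, or to exploit one large repeated degree. The paper lets $\beta$ be the largest degree attained at least twice.
\begin{itemize}
\item If $\beta\le n$, all degrees above $n$ are distinct, whence $\Delta\ge n+\sqrt{n}$. Let $u$ be a maximum-degree vertex, and let $C$ be the $2\ell$-core of $G$ after removing $u$ and the vertices of degree at most about $2n-\Delta+2\ell$. Routing through $u$, any two vertices of $\{u\}\cup V(C)$ are joined by a $P_{2\ell+2}$, so their degrees are distinct too. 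A degree-sum computation then contradicts $e(G)\ge n^2+n$.
\item If $\beta>n$, the two vertices $u_1,u_2$ of degree $\beta$ force $G[B_{12}]$, $G[B_1,B_2]$ and $G[B_i,B_{12}]$ to be $P_{2\ell}$-free. Erd\H{o}s--Gallai turns this into many missing edges. A second round with two vertices of $B_{12}$ of equal degree into $D$ brings the count to about $n^2$. Careful accounting of missing edges then pins $G$ down to $K_{n,n+1}$.
\end{itemize}
Nothing in your outline plays the role of either mechanism.
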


% -------------------------------------------------------------
% \section{Preliminaries and proof sketch}\label{sec: preliminary}

\section{Preliminaries}\label{sec: preliminaries}

We first introduce some notation.
For a graph $G$, let $e(G)$ be the number of edges in $G$.
For a vertex set $S$, let $G[S]$ be the subgraph of $G$ induced by $S$.
For two disjoint vertex sets $S_1$ and $S_2$, denote by $G[S_1, S_2]$ the bipartite subgraph of $G$ with vertex set $S_1 \cup S_2$ and edges joining vertices in $S_1$ and $S_2$.
%Let $e(S_1,S_2)$ be the number of edges in $G[S_1, S_2]$.
For a vertex $v\in V(G)$, let $N(v)=\{u\in V(G):uv\in E(G)\}$, $N[v]=N(v)\cup\{v\}$, and $d(v)=|N(v)|$ be the degree of $v$.
For  a vertex set $S\subseteq V(G)$, let $N_S(v)=N(v)\cap S$ and $d_S(v)=|N_S(v)|$.
Denote by $P_k$ a path on $k$ vertices.

We follow the ideas of Chen and Ma~\cite{CHEN20261}, and Liu and Zeng~\cite{2025arXiv250500523L}.
Let $G$ be a graph on $2n+1$ vertices with at least $n^2+n$ edges such that no two vertices with the same degree are connected by a $P_{2\ell+2}$.
In this paper, we let $\ell \ge 3$.
Our goal is to prove that $e(G) = n(n+1)$ when $n$ is sufficiently large, and it only holds when $G$ is the complete bipartite graph $K_{n,n+1}$.

Let $\mathcal{M}$ be the set of missing edges in $G$.
Then we have that
\begin{equation}\label{eq: missing edges}
	|\mathcal{M}| = \binom{2n+1}{2} - e(G)\le n^2.
\end{equation}
For convenience, let $\mathcal{M}[S]$ be the set of missing edges in $G[S]$ and $\mathcal{M}[S_1, S_2]$ be the set of missing edges in $G[S_1, S_2]$.

Let $\Delta$ be the maximum degree of  $G$.
Since the degree of each vertex in $G$ lies in $\{0,1,\dots,2n\}$, and  $|V(G)|=2n+1$, by the pigeonhole principle, there exist two vertices with the same degree.
Let $\beta$ be the largest integer such that there exist at least two vertices with degree $\beta$.
Let $S$ be the sum of degrees of all vertices in $G$. Then
\begin{equation}\label{eq: sum of degrees lower bound}
	S = \sum_{v \in V} d(v) = 2e(G)\geq 2n^2+2n.
\end{equation}
We will consider two cases when $\beta > n$ and $\beta \le n$, which are solved in Sections~\ref{sec: beta > n} and \ref{sec: beta < n}, respectively.
From now on, we always assume that $n \ge f(\ell)$ for some large polynomial function $f(\ell)$ of $\ell$.

\section{Proof of Theorem \ref{thm: main 2n+1} in the \texorpdfstring{$\beta > n$ case}{β > n}}\label{sec: beta > n}
Recall that $G$ is a graph on $2n+1$ vertices with at least $n^2+n$ edges, such that no two vertices with the same degree are connected by a $P_{2\ell+2}$, where $P_k$ is the path on $k$ vertices.
We need the Erd\H{o}s-Gallai theorem as follows.

\begin{theorem}[Erd\H{o}s and Gallai~\cite{erdos1959maximal}]\label{thm: Erd\H{o}s and Gallai}
	Let $G$ be a $P_k$-free graph on $n$ vertices, then $e(G) \le \frac{(k-2)n}{2}$.
\end{theorem}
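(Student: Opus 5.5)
We argue by a minimal counterexample combined with a longest-path (P\'osa-type rotation) argument. Suppose the statement fails, and among all $P_k$-free graphs $G$ with $e(G) > \frac{(k-2)|V(G)|}{2}$ choose one with the fewest vertices; write $n=|V(G)|$. The plan is to extract from minimality that $G$ is connected, has large minimum degree, and has more than $k-1$ vertices. We then show that a longest path can be closed into a cycle, which connectivity lets us extend. This contradicts the choice of the path as longest.

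\emph{Step 1 (reductions).} First, $G$ is connected. Otherwise some component $H$ satisfies $e(H) > \frac{(k-2)|V(H)|}{2}$, since the edge counts and the vertex counts both add over components. That component is a smaller counterexample.

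Second, every vertex $v$ has $d(v) > \frac{k-2}{2}$. Otherwise
\[
e(G-v) \ge e(G) - \tfrac{k-2}{2} > \tfrac{(k-2)(n-1)}{2},
\]
so $G-v$ is a smaller counterexample; it is still $P_k$-free. Since degrees are integers, $2d(v) \ge k-1$ for every $v$.

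Third, $n \ge k$. Indeed, if $n \le k-1$, then $e(G) \le \binom{n}{2} = \frac{(n-1)n}{2} \le \frac{(k-2)n}{2}$, which is a contradiction.

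\emph{Step 2 (longest path).} Let $P=v_0v_1\cdots v_m$ be a longest path in $G$. Since $G$ is $P_k$-free, $P$ has at most $k-1$ vertices, so $m \le k-2$. By maximality of $P$, all neighbors of $v_0$ and of $v_m$ lie on $P$. Define
\[
A=\{i\in\{0,\dots,m-1\}: v_0v_{i+1}\in E(G)\}, \qquad B=\{i\in\{0,\dots,m-1\}: v_iv_m\in E(G)\}.
\]
Then $|A|=d(v_0)$ and $|B|=d(v_m)$. By Step 1,
\[
|A|+|B| \ge k-1 \ge m+1 > m,
\]
so by the pigeonhole principle there is an index $i\in A\cap B$.

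For this $i$, the closed walk $v_0v_{i+1}v_{i+2}\cdots v_m v_i v_{i-1}\cdots v_0$ is a cycle $C$ with vertex set $V(P)$, which has $m+1$ vertices. (When $i=0$ or $i=m-1$ this simply uses the edge $v_0v_m$.)

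\emph{Step 3 (extension).} We have $m+1 \le k-1 < n$, so some vertex $u$ lies outside $C$. Since $G$ is connected, we may choose such a $u$ adjacent to some vertex $w$ of $C$. Opening $C$ at $w$ and attaching $u$ gives a path on $m+2$ vertices. This contradicts the maximality of $P$, which completes the proof.

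The only delicate points are the two degree thresholds. First, the deletion step must give the integer bound $2d(v)\ge k-1$. Second, we need $k-1 \ge m+1$, which holds exactly because a $P_k$-free graph has longest path with at most $k-1$ vertices. With these two inequalities the pigeonhole count $|A|+|B|>m$ goes through, and this is the step I expect to require the most care.
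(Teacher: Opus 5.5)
Your proof is correct. The paper does not prove this statement: it quotes Erd\H{o}s and Gallai and uses the bound as a black box (with $k=2\ell$ or $k=2\ell-2$), so there is no argument in the paper to compare yours against.

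What you give is the standard self-contained argument. Minimality forces $G$ to be connected, with $2d(v)\ge k-1$ for every $v$ and $n\ge k$. The P\'osa/Dirac closing trick on a longest path, followed by extension through connectivity, then produces a longer path. A common alternative deduces the bound from the Erd\H{o}s--Gallai circumference theorem by adjoining a vertex joined to all others; that route is shorter only if the cycle theorem is taken for granted.

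One small point you could add for completeness concerns whether $C$ is a genuine cycle. It is, because $m\ge 2$: $G$ has an edge, so $m\ge 1$, hence $n\ge k\ge m+2\ge 3$, and a connected graph on at least three vertices contains a path on three vertices. In any case, Step~3 only needs a Hamiltonian path of $G[V(P)]$ ending at the chosen vertex $w$.
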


Let $u_1$ and $u_2$ be the two vertices with degree $\beta$ in $G$.
Let $\alpha = \mathbf{1}_{u_1u_2 \in E(G)}$, that is, $\alpha = 1$ if $u_1u_2 \in E(G)$, and $\alpha = 0$ otherwise.
Let $c = \beta - n - \alpha$. Then  we have $c \ge 0$.

Define
\begin{equation*}
\begin{aligned}
	B_{12} &= N(u_1) \cap N(u_2),\\
	B_{1} &= N(u_1) \setminus N[u_2],\\
	B_{2} &= N(u_2) \setminus N[u_1],
\end{aligned}
\end{equation*}
and $B = B_{1} \cup B_{2} \cup B_{12}$, $D = V(G) \setminus (B\cup \{u_1,u_2\})$.

From now on, we always use the corresponding lower case letters to denote the size of each set.
For example, $b_{i} = |B_i|, d = |D|$ and $|B_{12}| = b_{12}$. Note that $N(u_i)\setminus\{u_{i+1}\}=B_i\cup B_{12}$ for $i=1,2$, where the subscript modulo 2. Thus $\beta -\alpha= b_1+b_{12}=b_2+b_{12}$. Moreover, we have
\begin{equation}\label{eq: using c}
\begin{aligned}
	& b = 2(\beta - \alpha) - b_{12} = 2n + 2c - b_{12} = 2n-1-d, \\
	& d = 2n-1 - b = b_{12}-2c-1, \\
	& b_1 = b_2 = \beta - \alpha - b_{12} = c+n-b_{12} = n-c-1-d.
\end{aligned}
\end{equation}
By (\ref{eq: using c}), $b_1=b_2\le n$.
Note that $G[B_{12}]$ is $P_{2\ell}$-free; otherwise, there is a path with length $2\ell+1$ connecting $u_1$ and $u_2$. Thus, $e(G[B_{12}]) \le 2\ell n$ by Theorem~\ref{thm: Erd\H{o}s and Gallai}. Similarly, $G[B_1, B_{12}]$, $G[B_2, B_{12}]$ and $G[B_1, B_2]$ are $P_{2\ell}$-free, and each of them has at most $2\ell n$ edges.
Then, $e(G[B_{12}]) + e(G[B_1,B_{12}]) + e(G[B_2,B_{12}]) + e(G[B_1,B_2]) \le 8\ell n$.
\begin{claim}\label{claim: lower bound of D}
	$d > n^{0.9}$.
\end{claim}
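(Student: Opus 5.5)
We argue by contradiction: assume $d \le n^{0.9}$. First we show by edge counting that $G[B_1]$ is a nearly complete graph on nearly $n$ vertices. Then we use the degree condition: pigeonhole yields two vertices of $B_1$ with equal degree, and density joins them by a $P_{2\ell+2}$. Counting alone cannot finish, because the error term $d^2$ may be as large as $n^{1.8}$.

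\emph{Step 1 (counting).} Every edge of $G$ lies in one of the following: $G[B_1]$, $G[B_2]$, $G[B_1\cup B_2, D]$, $G[D]$, $G[B_{12},D]$, the four $P_{2\ell}$-free pieces (at most $8\ell n$ edges in total), or is incident to $u_1$ or $u_2$ (at most $4n$ edges). Write $x=b_1=b_2$, so that $x+d=n-c-1$ by (\ref{eq: using c}), and $b_{12}=d+2c+1$. Then
\[
e(G)\le x^2+2xd+\tfrac{d^2}{2}+b_{12}d+O(\ell n)=(n-c-1)^2+\tfrac{d^2}{2}+(2c+1)d+O(\ell n).
\]
Comparing with $e(G)\ge n^2+n$ and using $d\le n^{0.9}$ first gives $c=O(n^{0.8})$. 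Substituting back, $x=n-O(n^{0.9})$, and the total slack in this estimate is $O(n^{1.8})$. In particular $|\mathcal M[B_1]|=O(n^{1.8})$: each missing edge inside $B_1$ lowers the bound by one, and the bound exceeds $n^2+n$ by only $O(n^{1.8})$.

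\emph{Step 2 (good vertices).} Call $v\in B_1$ \emph{good} if it satisfies both conditions:
\begin{itemize}
\item it has at most $n^{0.85}$ non-neighbours in $B_1$;
\item it has at most $n^{0.9}$ neighbours in $B_2\cup B_{12}$.
\end{itemize}
By Step 1, at most $O(n^{1.8}/n^{0.85})=O(n^{0.95})$ vertices fail the first condition. Since $e(G[B_1,B_2])+e(G[B_1,B_{12}])\le 4\ell n$, at most $O(\ell n^{0.1})$ vertices fail the second. Hence $B_1$ contains $n-O(n^{0.95})$ good vertices. A good vertex $v$ has neighbours only in $B_1\cup B_2\cup B_{12}\cup D\cup\{u_1,u_2\}$, so
\[
d(v)\in\bigl[x-1-n^{0.85},\; x+1+n^{0.9}+d+2\bigr],
\]
an interval of length $O(n^{0.9})$. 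By pigeonhole, two good vertices $v,w\in B_1$ have $d(v)=d(w)$.

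\emph{Step 3 (embedding the path).} Any two good vertices have at least $x-2n^{0.85}-O(n^{0.95})=n-o(n)$ common good neighbours in $B_1$. We build a path $v=p_0,p_1,\dots,p_{2\ell+1}=w$ greedily. For $i\le 2\ell-1$, choose each $p_i$ as a fresh good neighbour of $p_{i-1}$; then choose $p_{2\ell}$ as a fresh common good neighbour of $p_{2\ell-1}$ and $w$. At every step only $O(\ell)$ vertices are forbidden, so the choice is possible once $n\ge f(\ell)$. This $P_{2\ell+2}$ joins two vertices of equal degree, a contradiction, so $d>n^{0.9}$.

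The main obstacle is Step 1: extracting from the edge count both $c=O(n^{0.8})$ and $|\mathcal M[B_1]|=O(n^{1.8})$. This needs the exact identity $x+d=n-c-1$ from (\ref{eq: using c}), so that the $x^2+2xd+d^2$ terms collapse to $(n-c-1)^2$ and the errors remain $O(n^{1.8})$. The exponents $0.85$ and $0.9$ must then be chosen so that the degree window of good vertices, of length $O(n^{0.9})$, is much smaller than their number, which is about $n$.
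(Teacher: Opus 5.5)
Your proof is correct, but after the common first step it finishes by a genuinely different route from the paper. Both arguments start from the same edge count, which shows that $G[B_1]$ and $G[B_2]$ are nearly complete on nearly $n$ vertices. The paper only uses the crude form $|\mathcal{M}[B_1]|+|\mathcal{M}[B_2]|\le 3n^{1.9}+2n$ together with $b_1=b_2\ge n-n^{0.95}$.

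\textbf{The paper's finish (global).} It shows that few vertices of $B_i$ have degree at least $1.1n$. It then uses the degree sum $S\ge 2n^2+2n$ to force three vertices of one $B_i$ sharing a degree at least $0.6n$. It takes two of them with $0.2n$ neighbours in the same part and applies Erd\H{o}s--Gallai to their common or private neighbourhoods, obtaining $0.004n^2$ missing edges. So the contradiction is again a missing-edge count.

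\textbf{Your finish (local).} Averaging turns the global bound on $\mathcal{M}[B_1]$ into $n-O(n^{0.95})$ good vertices whose degrees lie in a window of width $O(n^{0.9})$. Pigeonhole then gives the equal-degree pair directly. Near-completeness lets you embed the $P_{2\ell+2}$ greedily, so the contradiction is the forbidden path itself.

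Your version avoids the degree-sum computation, the $\tilde B_i$ device and the case split on $|A_1\cap A_2|$. The paper's version keeps the whole section in the single idiom of counting missing edges.

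\textbf{Two small remarks.}
\begin{enumerate}
\item The step $c=O(n^{0.8})$ deserves a line. Bounding $(2c+1)d$ by $2n^{1.9}$ is too weak. Instead use $c+1\le n-d$ (that is, $x\ge 0$) to get $(n-c-1)^2+(2c+1)d\le n^2-(c+1)(n-d)-d$.
\item The exact identity $x+d=n-c-1$ is convenient but not essential, contrary to your closing remark. With the paper's cruder $O(n^{1.9})$ bound your argument still goes through if you raise the non-neighbour threshold from $n^{0.85}$ to $n^{0.95}$. The degree window then has width $O(n^{0.95})$, while there are still $n-O(n^{0.95})$ good vertices.
\end{enumerate}
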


\begin{proof}
	Suppose otherwise $d \le  n^{0.9}$.
	Then we have
	\begin{equation}\label{eq: prove lb of D}
	\begin{aligned}
		n^2 + n \le e(G) &\le 2\beta + e(G[B_1]) + e(G[B_2]) + e(G[B_{12}])
		+ e(G[B_1,B_{12}])\\
		 &+ e(G[B_2,B_{12}]) + e(G[B_1,B_2]) + e(G[D]) + e(G[B,D]) \\
		 &\leq 2\beta + e(G[B_1]) + e(G[B_2]) + 8\ell n + e(G[D]) + e(G[B,D])\\
		&\le 2\beta + \binom{b_1}{2}+\binom{b_2}{2} + 3n^{1.9} - |\mathcal{M}[B_1]| - |\mathcal{M}[B_2]|.
	\end{aligned}
	\end{equation}
The last inequality holds because $e(G[D])+e(G[B,D])+8\ell n\le 2nd+8\ell n\le 3n^{1.9}$ since $d\le n^{0.9}$ and $n$ is large enough.
Note that $b_1=b_2=\beta -\alpha -b_{12}$. Then (\ref{eq: prove lb of D}) implies that $b_1 = b_2 \ge n-n^{0.95}$, otherwise we have a contradiction when $n$ is sufficiently large.
Define
$$\tilde{B}_i = \{v \in B_i \mid d(v) \ge 1.1n\}, i=1,2.$$
%Let $E_1$ be the set of edges with one endpoint in $B_1$ and the other endpoint outside of $B_1$.
Then $e(G[B_1,V\setminus B_{1}]) \le b_1 + e(G[B_1,B_{12}]) + e(G[B_1,B_2]) + e(G[B_1,D]) \le 3n^{1.9}$.
Note that each vertex in $\tilde{B}_1$ corresponds to at least $0.1n$ edges in $G[B_1,V\setminus B_{1}]$, then $|\tilde{B}_1| \le 30n^{0.9}$.
Similarly, we have $|\tilde{B}_2| \le 30n^{0.9}$.

	Then we claim that, there exists three vertices $v_1, v_2, v_3 \in B_1$~(or equivalently, $B_2$) with the same degree which is at least $0.6n$.
	Otherwise, consider the sum of degrees of vertices in $B_1$.
	\begin{equation*}
	\begin{aligned}
		\sum_{v \in B_1} d(v) & = \sum_{v \in \tilde{B}_1} d(v) + \sum_{v \in B_1 \setminus \tilde{B}_1} d(v) \\
		& \le 60n^{1.9} + \left(1.1n + 1.1n + (1.1n-1) +(1.1n-1) + \ldots + 0.6n +  0.6n\right) \\
		& \le 60n^{1.9} + \frac{(1.1n + 0.6n)n}{2}\\
		& \le 0.85n^2 + 60n^{1.9}.
	\end{aligned}
	\end{equation*}
	Similarly, we have $\sum_{v \in B_2} d(v)\le 0.85n^2 + 60n^{1.9}.$ And the vertices outside $B_1\cup B_2$ are contained in $B_{12}\cup D\cup \{u_1,u_2\}$, which has size at most $2n^{0.95}$, and the sum of degrees of these vertices is at most $4n^{1.95}$.
	Recall that $S$ is the sum of all degrees of vertices in $G$. Then we have that
	\begin{equation*}
	\begin{aligned}
		S & \le \sum_{v \in B_1} d(v) + \sum_{v \in B_2} d(v) + 4n^{1.95}\\
		&\le 2(0.85n^2 + 60n^{1.9}) + 4n^{1.95}\\
		&\le 2n^2 + n
	\end{aligned}
	\end{equation*}
	when $n$ is sufficiently large, a contradiction with (\ref{eq: sum of degrees lower bound}).

	Therefore, we may assume that there exist three vertices $v_1, v_2, v_3 \in B_1$ with the same degree which is at least $0.6n$.
	Recall that the number of vertices in $B_{12}\cup D\cup \{u_1, u_2\}$ is at most $2n^{0.95}$.
	Then for each $v_i$~($i=1,2,3$), it either has at least $0.2n$ neighbors in $B_1$ or has at least $0.2n$ neighbors in $B_2$.
	By the pigeonhole principle, without loss of generality, we may assume that $v_1$ and $v_2$ have at least $0.2n$ neighbors in $B_1$.
	Denote the $0.2n$ neighbors of $v_1$ and $v_2$ in $B_1$ by $A_1$ and $A_2$ respectively.
	If $\left| A_1 \cap A_2 \right| \ge 0.1n$, then $A_1 \cap A_2$ is $P_{2\ell}$-free.
	By Theorem~\ref{thm: Erd\H{o}s and Gallai}, we have at least $\binom{0.1n}{2} - 2\ell n$ missing edges in $G[A_1 \cap A_2]$, that is $|\mathcal{M}[B_1]| \ge 0.004n^2$.
	If $\left| A_1 \cap A_2 \right| < 0.1n$, then similarly, $G[A_1\setminus A_2, A_2\setminus A_1]$ is $P_{2\ell}$-free, which also leads to at least $0.004n^2$ missing edges in $G[B_1]$.
	Therefore, we have $|\mathcal{M}[B_1]| \ge 0.004n^2$ in all cases.
	Combining (\ref{eq: prove lb of D}) and $b_1 = b_2 \le n$, we have a contradiction when $n$ is sufficiently large.
\end{proof}

\begin{claim}\label{claim: v1 v2 in B12}
	There exist two distinct vertices $v_1, v_2 \in B_{12}$ such that $d_D(v_1) = d_D(v_2)$.
\end{claim}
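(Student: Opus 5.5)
The plan is to first try pigeonhole on the values $d_D(v)$ for $v\in B_{12}$, and then to handle the single tight case with an edge count. Each $d_D(v)$ lies in $\{0,1,\dots,d\}$, so there are only $d+1$ possible values. By (\ref{eq: using c}), $b_{12}=d+2c+1$. If $c\ge 1$ then $b_{12}\ge d+3>d+1$, and two vertices of $B_{12}$ share a value of $d_D$, which proves the claim. So the only case to exclude is $c=0$. There $b_{12}=d+1$, and if the claim failed, the values $d_D(v)$, $v\in B_{12}$, would be exactly $0,1,\dots,d$, each taken once. I will assume this and aim for a contradiction with $|\mathcal M|\le n^2$ from (\ref{eq: missing edges}).

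First I count missing edges in this configuration, where $b_1=b_2=n-1-d$ and $b_1+b_{12}=n$. The graphs $G[B_1,B_2]$, $G[B_1,B_{12}]$, $G[B_2,B_{12}]$ and $G[B_{12}]$ each have at most $2\ell n$ edges, as noted before Claim~\ref{claim: lower bound of D}. Hence
\[
|\mathcal M[B_1,B_2]|+|\mathcal M[B_1,B_{12}]|+|\mathcal M[B_2,B_{12}]|+|\mathcal M[B_{12}]|\ \ge\ n^2-\tfrac{(d+1)(d+2)}{2}-8\ell n .
\]
Since the $d_D$-values on $B_{12}$ are $0,\dots,d$, we also get $|\mathcal M[B_{12},D]|=\sum_{j=0}^{d}(d-j)=\tfrac{d(d+1)}{2}$. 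In addition, the $2d$ pairs $u_iz$ with $z\in D$ are missing. Adding these up gives $|\mathcal M|\ge n^2+d-1-8\ell n+|\mathcal M[D]|$. So it suffices to show $|\mathcal M[D]|\ge 9\ell n$. Because Claim~\ref{claim: lower bound of D} gives $d>n^{0.9}$, it is enough to find $\Omega(d^2)$ missing edges inside $D$.

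Second, I extract this from the forbidden path. Let $H\subseteq B_{12}$ be the set of vertices with $d_D\ge d/2$. Then $|H|\ge d/2$, and in particular there are $x\ne x'$ in $H$ with $|N_D(x)|,|N_D(x')|\ge d/2$. Suppose $G[D]$ contains a path $z_1\dots z_{2\ell-2}$ (length $2\ell-3$) with $z_1\in N_D(x)$, $z_{2\ell-2}\in N_D(x')$, and $x,x'$ not on it (they are not in $D$). Then $u_1\,x\,z_1\cdots z_{2\ell-2}\,x'\,u_2$ is a path of length $2\ell+1$ between two vertices of degree $\beta$, a contradiction. So $G[D]$ has no such path between $N_D(x)$ and $N_D(x')$, two sets each of size at least $d/2$. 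If $|N_D(x)\cap N_D(x')|\ge d/4$, then $G[N_D(x)\cap N_D(x')]$ is $P_{2\ell-2}$-free. Otherwise $G[N_D(x)\setminus N_D(x'),\,N_D(x')\setminus N_D(x)]$ is $P_{2\ell-2}$-free, on two sides of size at least $d/4$. In either case Theorem~\ref{thm: Erd\H{o}s and Gallai} gives $|\mathcal M[D]|\ge c_0d^2-2\ell d$ for an absolute constant $c_0>0$, exactly as in the end of the proof of Claim~\ref{claim: lower bound of D}. With $d>n^{0.9}$ and $n\ge f(\ell)$ this exceeds $9\ell n$, which yields $|\mathcal M|>n^2$, a contradiction.

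The main obstacle is the $c=0$ case: plain pigeonhole falls exactly one vertex short, and the basic missing-edge count only reaches $n^2+d-O(\ell n)$. The key step is therefore the odd path routed through an odd-length path inside $G[D]$ between the $D$-neighbourhoods of two high-$d_D$ vertices of $B_{12}$. I expect the delicate points to be the parity bookkeeping (a $D$-path of length $2\ell-3$, hence $2\ell-2\ge 4$ vertices since $\ell\ge3$) and the check that the Erd\H{o}s--Gallai deficit of order $d^2\ge n^{1.8}$ dominates the $O(\ell n)$ error terms.
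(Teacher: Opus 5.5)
Your proof is correct and follows essentially the paper's route. Pigeonhole reduces everything to the tight case $c=0$, where the $d_D$-values on $B_{12}$ are exactly $\{0,\dots,d\}$. There the path $u_1\,x\,(\text{path in } G[D])\,x'\,u_2$ together with Erd\H{o}s--Gallai forces $\Omega(d^2)\gg \ell n$ missing pairs inside $D$, which contradicts the global count since $d>n^{0.9}$.

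The differences are cosmetic. The paper takes $x,x'$ with $d_D=d$ and $d_D=d-1$, so all of $G[D]$ is $P_{2\ell-2}$-free and no intersection/difference case split is needed. It also phrases the final count as an upper bound on $e(G)$ rather than a lower bound on $|\mathcal M|$.
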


\begin{proof}
By (\ref{eq: using c}), $b_{12} - d = 1+2c$.
For every vertex $v \in B_{12}$, $0 \le d_D(v) \le d$.
By pigeonhole principle, the statement fails only when $c = 0$, that is, $\beta = n + 1$ and $\alpha = 1$, and $\{d_D(v) \mid v \in B_{12}\} = \{0,1,\ldots, d\}$.
In this case, let $v_i$ be the vertex in $B_{12}$ with $d_D(v_i) = d-i$, $i=0,1,\ldots,d$.
We claim that $G[D]$ is $P_{2\ell-2}$-free; otherwise $u_1$ and $u_2$ are connected by a $P_{2\ell+2}$.
By Theorem~\ref{thm: Erd\H{o}s and Gallai}, $e(G[D]) \le 2\ell n$.
Note that $b_1 = b_2 = n-d-1-c$ by (\ref{eq: using c}).
Suppose for the contrary that for every two distinct vertices $v_1,v_2\in B_{12}$, we have  $d_D(v_1)\neq d_D(v_2)$. Then we have $e(G[B_{12},D])\leq \frac{d(d+1)}{2}$. Thus $e(G[B,D])=e(G[B_{12},D])+e(G[B_1,D])+e(G[B_2,D])\leq \frac{d(d+1)}{2}+d(b_1+b_2)$.
Then we have that
\begin{equation*}
\begin{aligned}
	n^2 + n \le e(G) & \le 2\beta + e(G[B_1]) + e(G[B_2]) + 10\ell n  + e(G[B,D]) \\
	& \le 4n+ \binom{b_1}{2}+\binom{b_2}{2} + 10\ell n + \frac{d(d+1)}{2} + d(b_1+b_2)\\
	& \le  2\binom{n-d-1}{2} + (10\ell+4) n + \frac{d(d+1)}{2} + 2d(n-d-1)\\
	& \le n^2  - \frac{d^2}{2} + (10\ell+4) n.
\end{aligned}
\end{equation*}
By Claim~\ref{claim: lower bound of D}, we have  a contradiction.
\end{proof}

By Claim \ref{claim: v1 v2 in B12}, let $\gamma$ be the maximum integer such that there exist vertices $v_1, v_2 \in B_{12}$ with $d_D(v_1) = d_D(v_2) = \gamma$.
Define
\begin{equation*}
\begin{aligned}
	Y_{12} &= N(v_1) \cap N(v_2) \cap D,\\
	Y_1 &= \left(N(v_1) \cap D\right) \setminus N(v_2),\\
	Y_2 &= \left(N(v_2) \cap D\right) \setminus N(v_1).\\
\end{aligned}
\end{equation*}
And let $Y=Y_1\cup Y_2\cup Y_{12}$.
See Figure~\ref{fig: Y}.
Following the ideas in~\cite{2025arXiv250500523L}, let us first estimate the number of missing edges in $G$.
\begin{equation}\label{eq: M[B]}
	\begin{aligned}		
		|\mathcal{M}[B \cup \{u,v\}]| & \ge 2(\beta - \alpha - b_{12})b_{12} + \binom{b_{12}}{2} + 2(\beta - \alpha - b_{12}) + (\beta-\alpha - b_{12})^2 - 8\ell b\\
		& \ge (\beta +1 - \alpha)^2 - \frac{1}{2}(b_{12}^2 + 5b_{12}) - 9\ell b\\
		& \ge (c+n)^2 - \binom{b_{12}+1}{2} - 24\ell n.
	\end{aligned}
	\end{equation}
	Note that $\gamma \le d = b_{12}-2c-1$ and $b_{12} \le \beta - \alpha = n+c$. Then we have that
	\begin{equation}\label{eq: M[B12,D]}
	\begin{aligned}
		|\mathcal{M}[B_{12}, D]|  &\ge b_{12}d - (d + (d - 1) + \ldots +(d-(d-\gamma+1))+ \gamma  + \ldots + \gamma ) \\
		& = b_{12}d - \frac{(d+\gamma+1)(d-\gamma)}{2} - \gamma(b_{12}-(d-\gamma)) \\
		& = \binom{b_{12}+1}{2} - \binom{\gamma}{2} -2c^2 - c - (2c+1)\gamma - b_{12}\\
		& \ge \binom{b_{12}+1}{2} - \binom{\gamma}{2} +2c^2+3c+1-2(c+1)b_{12}\\
		& \ge \binom{b_{12}+1}{2} - \binom{\gamma}{2} -2nc - 2n.
 	\end{aligned}
	\end{equation}
	Similarly as in $B$, we have that $G[Y_{12}]$, $G[Y_1, Y_{12}]$, $G[Y_2, Y_{12}]$ and $G[Y_1, Y_2]$ are all $P_{2\ell-2}$-free, and each of them has at most $\ell n$ edges.
	Therefore,
	\begin{equation}\label{eq: M[Y]}
	\begin{aligned}
	\left|\mathcal{M}[D] \right|\geq \left|\mathcal{M}[Y]\right| & \ge \binom{y_{12}}{2} + 2y_{12}(\gamma - y_{12}) + (\gamma-y_{12})^2 - 8\ell n \\
		& \ge \gamma^2 - \frac{1}{2}y_{12}^2 - \frac{1}{2}y_{12} - 8\ell n\\
		& \ge \binom{\gamma}{2} - 8\ell n,
	\end{aligned}
	\end{equation}
	where the last inequality is from $y_{12} \le \gamma$.
	Combining (\ref{eq: missing edges}), (\ref{eq: M[B]}), (\ref{eq: M[B12,D]}) and (\ref{eq: M[Y]}), we have that
	\begin{equation}\label{eq: original extra linear term}
	\begin{aligned}
		n^2 \ge |\mathcal{M}| & \ge c^2 + n^2 - 29\ell n.
	\end{aligned}
	\end{equation}

\begin{figure}
	\centering
	\includegraphics[width=0.5\textwidth]{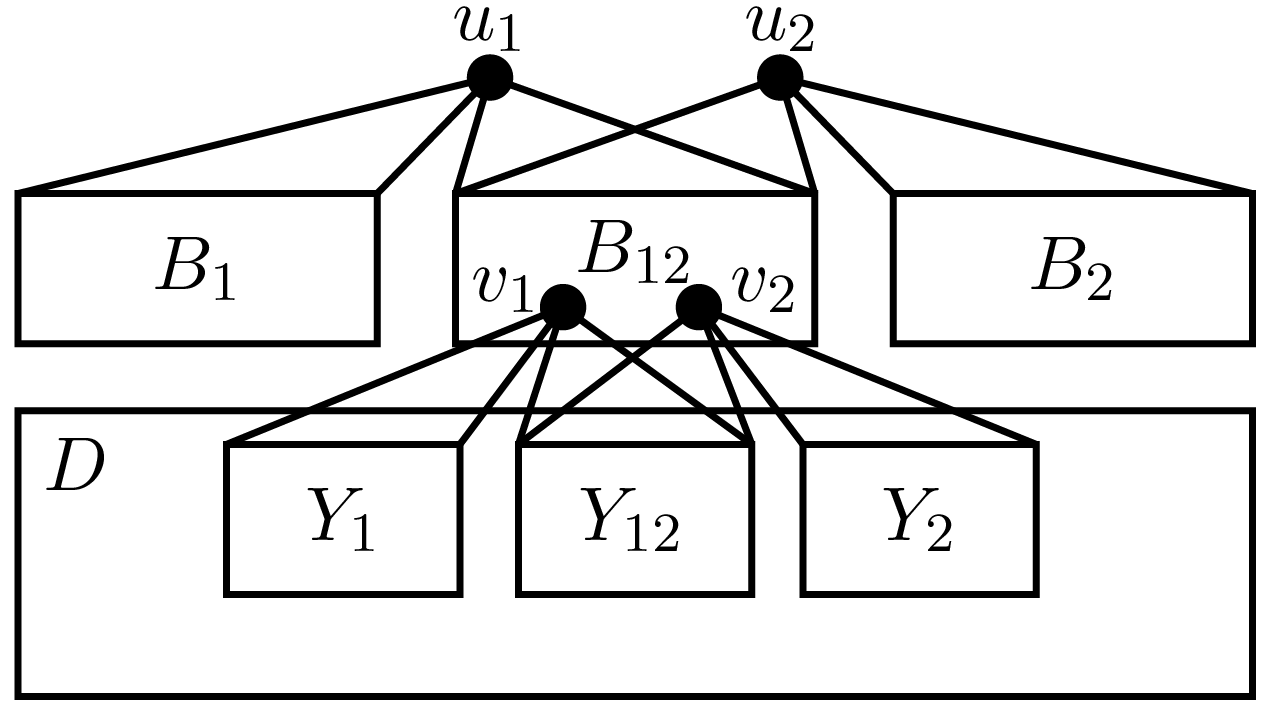}
	\caption{An illustration of the sets $Y_{12}$, $Y_1$ and $Y_2$.}\label{fig: Y}
\end{figure}

By a finer analysis, we have the following claim.
\begin{claim}\label{claim: lower bound of gamma y}
	For some fixed large constants $C = C(\ell)$ depending on $\ell$ only, we have that $c \le C n^{0.5}$,  $y_{12} \ge \gamma - C n^{0.5}$, and $\gamma \ge d - 2C n^{0.5}$.
\end{claim}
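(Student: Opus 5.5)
The plan is to keep the slack lost in each of the chained estimates (\ref{eq: M[B]}), (\ref{eq: M[B12,D]}), (\ref{eq: M[Y]}) instead of discarding it. Each claimed inequality then follows from $|\mathcal{M}| \le n^2$ in (\ref{eq: missing edges}). The three estimates involve disjoint sets of missing edges: $\mathcal{M}[B\cup\{u_1,u_2\}]$, $\mathcal{M}[B_{12},D]$ and $\mathcal{M}[D]$. So their sum is at most $n^2$, and any extra nonnegative term $X$ surviving in the sum must satisfy $X \le 29\ell n$.

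\emph{Step 1: the bound on $c$.} This is immediate from (\ref{eq: original extra linear term}): $c^2 \le 29\ell n$, so $c \le \sqrt{29\ell}\, n^{0.5}$.

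\emph{Step 2: the bound on $y_{12}$.} Redo (\ref{eq: M[Y]}) without weakening it. The second line gives $|\mathcal{M}[Y]| \ge \gamma^2 - \tfrac12 y_{12}^2 - \tfrac12 y_{12} - 8\ell n$. Write $\gamma^2 - \tfrac12 y_{12}^2 - \tfrac12 y_{12} = \binom{\gamma}{2} + \tfrac12(\gamma^2 - y_{12}^2) + \tfrac12(\gamma - y_{12})$, which is at least $\binom{\gamma}{2} + \tfrac12(\gamma-y_{12})^2$ since $y_{12}\le\gamma$. Feeding this extra term into the chain gives $n^2 \ge n^2 + c^2 + \tfrac12(\gamma - y_{12})^2 - 29\ell n$. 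Hence $\gamma - y_{12} \le \sqrt{58\ell}\, n^{0.5}$.

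\emph{Step 3: the bound on $\gamma$.} This is the delicate step. Return to (\ref{eq: M[B12,D]}) and use the maximality of $\gamma$. Among vertices of $B_{12}$ with $d_D$-value greater than $\gamma$, each value occurs at most once. All remaining vertices have $d_D \le \gamma$. The worst case is that values $d, d-1, \ldots, \gamma+1$ each occur once, and this already gives the first line of (\ref{eq: M[B12,D]}). That alone does not force $\gamma$ close to $d$, so I need an additional source of missing edges tied to $d-\gamma$.

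I would try the following first. If $d - \gamma = t$ is large, then about $b_{12} - t$ vertices of $B_{12}$ have $d_D \le \gamma = d - t$. Compare with a counterfactual bound in which $\gamma$ is replaced by $d$. In the identity $\binom{b_{12}+1}{2} - \binom{\gamma}{2} - (2c+1)\gamma - \ldots$, the term $-\binom{\gamma}{2}$ is exactly what (\ref{eq: M[Y]}) cancels; that cancellation is where the current argument is tight. So the extra slack must come from $\mathcal{M}[D\setminus Y]$ and $\mathcal{M}[Y, D\setminus Y]$, which were not counted. The set $D\setminus Y$ has size at least $d-2\gamma+y_{12}$. By Step 2 this is at least $t - Cn^{0.5}$.

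I would then show that vertices of $D\setminus Y$ cannot be adjacent to much of $Y$. There are two ways to argue this. First, a vertex $w\in D\setminus Y$ adjacent to many vertices of $Y_{12}$ extends the $P_{2\ell-2}$-freeness argument, since paths through $v_1,v_2$ give length-$(2\ell+1)$ connections between $u_1$ and $u_2$. Second, and more cheaply, recompute the degree sum of $D$: using $e(G[B,D])$ bounded via (\ref{eq: M[B12,D]}) together with $|\mathcal{M}|\le n^2$ should yield a loss of order $t\gamma$ or $t^2$. Either route gives $t^2 \lesssim \ell n$, so $t \le 2Cn^{0.5}$. Choosing $C$ as the maximum of the constants appearing in the three steps finishes the claim.

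I expect Step 3 to be the main obstacle. The bookkeeping must be arranged so that the deficit $d-\gamma$ appears quadratically and is not cancelled by the $\binom{\gamma}{2}$ term.
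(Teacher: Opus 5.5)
Your Steps 1 and 2 are correct and agree with the paper; Step 2 is just the paper's refined estimate of $|\mathcal{M}[Y]|$, written directly instead of by contradiction. The gap is Step 3, which remains a plan, and both routes you sketch fail as stated.

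Route (a) runs into a parity obstruction. Here $u_1,u_2$ and $D$ play the role of one side and $B$ the other. So a path $u_1v_1\,y\,w\,y'\,v_2u_2$, or any $u_1$--$u_2$ path alternating between $Y$ and $Z=D\setminus Y$ between $v_1$ and $v_2$, has even length. Hence a single vertex $w\in Z$ with many neighbours in $Y_{12}$ is not forbidden at all. What is forbidden is an \emph{edge} $w_1w_2$ between two vertices of $Z$ that each have more than $\frac12 y+2\ell$ neighbours in $Y$. This edge fixes the parity and gives the path $u_1v_1t_1w_1w_2t_2w_3\cdots w_{\ell-1}t_{\ell-1}v_2u_2$ on $2\ell+2$ vertices. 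Combined with counting $\mathcal{M}[Z_2]$, this only shows that at most $\frac12 Cn^{0.5}$ vertices of $Z$ have large $Y$-degree. The remaining $z_1\ge\frac12 Cn^{0.5}$ vertices each miss about $y/2\ge\gamma/2$ vertices of $Y$. So the gain is of order $\gamma t$, not $t^2$, and it contradicts $|\mathcal{M}|\le n^2$ only when $\gamma\gtrsim n^{0.5}$.

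The remaining regime $\gamma\le n^{0.5}$ is not handled. For example, when $\gamma=0$ we have $Y=\emptyset$, and the sources you single out, $\mathcal{M}[D\setminus Y]$ and $\mathcal{M}[Y,D\setminus Y]$, give nothing usable. Route (b) (``should yield a loss of order $t\gamma$ or $t^2$'') is an unsupported assertion, and ``either route gives $t^2\lesssim\ell n$'' is exactly what needs proof. The missing idea is that for small $\gamma$ the slack must come from missing edges between $B$ and $D$ that are not already counted in (\ref{eq: M[B12,D]}). The paper splits on whether $\beta\ge 2b_{12}$.

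\emph{Case $\beta\ge 2b_{12}$.} The vertices $w\in Z$ with $d_B(w)\ge\frac12 b+2\ell$ span no edge, again by a parity-fixing path argument. So all but $Cn^{0.5}$ vertices of $Z$ (at least $n^{0.8}$ of them, by Claim~\ref{claim: lower bound of D}) each miss at least $\frac12 b-2\ell-b_{12}\ge n/4-O(\ell)$ vertices of $B_1\cup B_2$. This gives $\Omega(n^{1.8})$ fresh missing edges in $\mathcal{M}[B\setminus B_{12},D]$.

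\emph{Case $\beta<2b_{12}$.} The vertices of $Z$ with at least $2\ell$ neighbours in $B_{12}$ induce a $P_{2\ell-2}$-free graph, so there are at most $Cn^{0.5}$ of them. All other vertices of $D\setminus Y$ send fewer than $2\ell$ edges to $B_{12}$. This improves (\ref{eq: M[B12,D]}) by an additional $\Omega(n^2)$.

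Without an argument of this kind for small $\gamma$, your Step 3 does not close.
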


\begin{proof}
	By (\ref{eq: original extra linear term}), we easily have that $c \le C n^{0.5}$ where $C$ is large enough.

	If $y_{12} < \gamma - C n^{0.5}$, then clearly $\gamma \ge Cn^{0.5}$, and we can get a refined bound for $|\mathcal{M}[Y]|$ from (\ref{eq: M[Y]}). That is
	\begin{equation}\label{eq: M[Y] 2}
	\begin{aligned}
		|\mathcal{M}[Y]| \ge & \gamma^2 - \frac{(\gamma - C n^{0.5})^2}{2} - \frac{\gamma - C n^{0.5}}{2} - 8\ell n \\
		& \ge \binom{\gamma}{2} + \frac{C^2}{2} n - 8\ell n.
	\end{aligned}
	\end{equation}
		Then combining (\ref{eq: M[B]}), (\ref{eq: M[B12,D]}) and (\ref{eq: M[Y] 2}), we have that
	\begin{equation}\label{eq: lb M}
	\begin{aligned}
		 |\mathcal{M}| & \ge c^2 + n^2 - 33\ell n  +\frac{C^2}{2}n.
	\end{aligned}
	\end{equation}
	Since $C$ is a large constant, we have a contradiction with (\ref{eq: missing edges}).

	Now we focus on the third statement.
	Suppose otherwise, $\gamma < d - 2C n^{0.5}$.
	Let $Z = D\setminus Y$, and define
	\begin{equation*}
	\begin{aligned}
		Z_1 &= \left\{w \in Z \mid d_Y(w) \le \frac{1}{2}y +2\ell \right\},\\
		Z_2 &= Z\setminus Z_1.
	\end{aligned}
	\end{equation*}
	By assumption, we have that $$z = d - y=d-(2\gamma-y_{12})=d-\gamma-(\gamma-y_{12}) \ge 2C n^{0.5}-C n^{0.5}=C n^{0.5}.$$
	We first claim that $z_2 \le \frac{1}{2}C n^{0.5}$.
	Suppose otherwise, $z_2 > \frac{1}{2}C n^{0.5}$.
	If $\mathcal{M}[Z_2] \ge \frac{1}{8}C n$, then we can add $\frac{1}{8}C n$ more missing edges to $\mathcal{M}[D]$ in (\ref{eq: M[Y]}). Combining (\ref{eq: M[B]}), (\ref{eq: M[B12,D]}), (\ref{eq: M[Y]}) and (\ref{eq: original extra linear term}), we have $|\mathcal{M}|> n^2$, a contradiction with (\ref{eq: missing edges}).
	Therefore, there exists an edge in $G[Z_2]$, say $w_1w_2$.
	Choose arbitrary vertices $w_3, w_4,\ldots,w_{\ell-1} \in Z_2$.
	By the definition of $Z_2$, for every $i=2,3,\ldots,\ell-2$, $w_i$ and $w_{i+1}$ have a distinct common neighbor in $Y$, say $t_i$~(see Figure~\ref{fig: Z2}).

	% Note that $y \ge \gamma \ge d - C n^{0.5} \ge n^{0.8}$ by Claim~\ref{claim: lower bound of D}.
	% Then we have that
	% \begin{equation*}
	% \begin{aligned}
	% 	y-y_{12} & = 2\gamma - 2y_{12} \le 2Cn^{0.5} \le 0.1y.
	% \end{aligned}
	% \end{equation*}
	Since $d_Y(w_1) \ge \frac{1}{2}y + 2\ell$, $w_1$ has a neighbor $t_1$ in $Y_1 \cup Y_{12}$ and similarly, $w_{\ell-1}$ has a neighbor $t_{\ell-1}$ in $Y_2 \cup Y_{12}$ such that $t_1,t_2,\ldots,t_{\ell-1}$ are distinct vertices.
	Then we have a path $u_1v_1t_1w_1w_2t_2w_3\ldots w_{\ell-1}t_{\ell-1}v_2u_2$ of length $2\ell -1$ connecting $u_1$ and $u_2$~(see Figure~\ref{fig: Z2}), a contradiction.

	\begin{figure}
		\centering
		\includegraphics[width=0.4\textwidth]{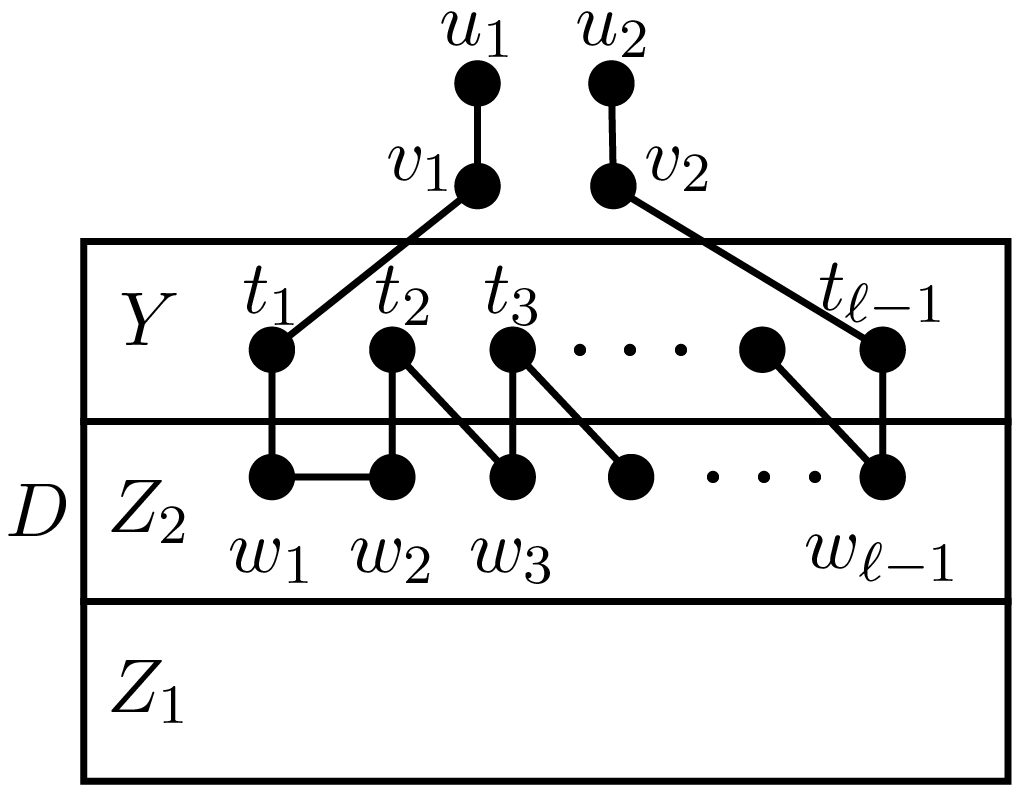}
		\caption{An illustration of the path $u_1v_1t_1w_1w_2t_2w_3\ldots w_{\ell-1}t_{\ell-1}v_2u_2$.}\label{fig: Z2}
	\end{figure}

	Now we have that $z_2 \le\frac{1}{2} C n^{0.5}$. Then $z_1 = z - z_2 \ge \frac{1}{2} C n^{0.5}$ by the lower bound of $z$.
	By the definition of $Z_1$, we have that $|\mathcal{M}[Z_1, Y]| \ge \left\lfloor \frac{1}{2} y - 2\ell \right\rfloor z_1$.
	If $\gamma > n^{0.5}$, then $|\mathcal{M}[Z_1, Y]|  \ge \gamma z_1/4 \ge C n/8$, and we can also add this term to $\mathcal{M}[D]$, and get a contradiction  with (\ref{eq: missing edges}) by (\ref{eq: M[B]}), (\ref{eq: M[B12,D]}), (\ref{eq: M[Y]}) and (\ref{eq: original extra linear term}) when $C$ is large enough.
	Then we conclude that $\gamma \le n^{0.5}$. We complete the proof by considering two cases.

	\textbf{Case 1:} $\beta \ge 2b_{12}$.
	In this case, we partition $Z$ in another way.
	Define
	\begin{equation*}
	\begin{aligned}
		Z_1' &= \left\{w \in Z \mid d_{B}(w) \ge \frac{1}{2}b + 2\ell \right\},\\
		Z_2' &= Z\setminus Z_1'.
	\end{aligned}
	\end{equation*}

	We claim that $z_1' \le Cn^{0.5}$.
	Suppose $z_1' > Cn^{0.5}$. Then we claim that $G[Z_1']$ cannot contain any edge.
	The statement can be proved by a similar argument as before.
	Then $|\mathcal{M}[Z_1']| \ge \binom{z_1'}{2} \ge \frac{C^2}{4}n$, and we can also add this term to $\mathcal{M}[D]$, and get a contradiction   with (\ref{eq: missing edges}) by (\ref{eq: M[B]}), (\ref{eq: M[B12,D]}), (\ref{eq: M[Y]}) and (\ref{eq: original extra linear term}) when $C$ is sufficiently large.
	
	Then  $z_2' = d - \gamma - z_1' \ge n^{0.8}$ by Claim \ref{claim: lower bound of D} and $\gamma \le n^{0.5}$.
	For each $w \in Z_2'$, it has at least $\frac{1}{2}b - 2\ell$ missing edges in $B$.
	Since $b_{12} \le \beta/2$, we have that $\frac{1}{2} b - b_{12} \ge \frac{1}{4}\beta \ge \frac{1}{4}n$.
	Then $|\mathcal{M}[B\setminus B_{12}, Z_2']| \ge \frac{1}{4}n z_2' \ge \frac{1}{4}n^{1.8}$.
	We can also add this term to $\mathcal{M}[D]$, and get a contradiction   with (\ref{eq: missing edges}) by (\ref{eq: M[B]}), (\ref{eq: M[B12,D]}), (\ref{eq: M[Y]}) and (\ref{eq: original extra linear term}).

	\textbf{Case 2:} $n < \beta < 2b_{12}$.
	Let us partition $Z$ in another way.
	Define
	\begin{equation*}
	\begin{aligned}
		Z_1'' &= \{w \in Z \mid d_{B_{12}}(w) \ge 2\ell \},\\
		Z_2'' &= Z\setminus Z_1''.
	\end{aligned}
	\end{equation*}
	It is clear that $G[Z_1'']$ is $P_{2\ell-2}$-free. If $z_1'' > Cn^{0.5}$, then $|\mathcal{M}[Z_1'']|\ge 1/2C^2n-O(n^{0.5})$. We can  add this term to $\mathcal{M}[D]$, and get a contradiction   with (\ref{eq: missing edges}) by (\ref{eq: M[B]}), (\ref{eq: M[B12,D]}), (\ref{eq: M[Y]}) and (\ref{eq: original extra linear term}).
So we have that $z_1'' \le Cn^{0.5}$.
	Recall that $d = b_{12} - 2c - 1$ from (\ref{eq: using c}) and $c\le Cn^{0.5}$ by Claim \ref{claim: lower bound of gamma y}. Then we have a better estimation of $\mathcal{M}[B_{12}, D]$:
	\begin{equation}
	\begin{aligned}
		|\mathcal{M}[B_{12}, D]| & \ge z_2''(b_{12} - 2\ell)\\
		& \ge \left(d - 2\gamma - z_1'' \right) (b_{12} - 2\ell) \\
		& \ge \left(b_{12} - ( 2 + C) n^{0.5} -2c-1 \right) (b_{12} - 2\ell) \\
		& \ge \binom{b_{12}+1}{2} + b_{12}^2/2  - O(n^{1.5}) \\
		& \ge \binom{b_{12}+1}{2} + n^2/8 - O(n^{1.5}).
 	\end{aligned}
	\end{equation}
	We have an extra $n^2$-term compared to (\ref{eq: M[B12,D]}), a contradiction   with (\ref{eq: missing edges}) by combining (\ref{eq: M[B]}) and (\ref{eq: M[Y]}).
\end{proof}

Claim~\ref{claim: lower bound of gamma y} shows that within a $O(n^{0.5})$-gap, $\beta \approx n$, $y_{12} \approx \gamma$ and $\gamma \approx d$.
% This gives a good characterization of the structure of $D$.
These properties are used to characterize the structure of $B$.
Define
\begin{equation*}
\begin{aligned}
	B' &= \left\{w \in B \mid d_D(w) \ge \frac{1}{2}d + 2\ell \right\}.	
\end{aligned}
\end{equation*}
Let $b'=|B'|$. We would like to show that $b \approx b'$.

\begin{claim}\label{claim: B setminus B'}
	$|B \setminus B' | \le 6C n^{0.6}$, where $C = C(\ell)$ is given in Claim \ref{claim: lower bound of gamma y}.
\end{claim}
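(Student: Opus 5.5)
The plan is to argue by contradiction, in the same way as in Claim~\ref{claim: lower bound of gamma y}: if too many vertices of $B$ lie outside $B'$, we exhibit about $n^{1.1}$ missing edges in $\mathcal{M}[B,D]$ that were not counted in (\ref{eq: M[B]}), (\ref{eq: M[B12,D]}) and (\ref{eq: M[Y]}). Adding them to the count in (\ref{eq: original extra linear term}) gives $|\mathcal{M}| \ge n^2 - 29\ell n + \Omega(n^{1.1}) > n^2$, which contradicts (\ref{eq: missing edges}).

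First, recall the parts of $B$. We have $b_{12} = d + 2c + 1$ with $c \le Cn^{0.5}$, and $b_1 = b_2 = n - c - 1 - d$. Each vertex $w \in B\setminus B'$ satisfies $d_D(w) < \frac{1}{2}d + 2\ell$, so it has more than $\frac{1}{2}d - 2\ell$ missing edges to $D$. By Claim~\ref{claim: lower bound of D}, $d > n^{0.9}$, so this is at least $\frac{1}{3}d \ge \frac{1}{3}n^{0.9}$ missing edges per vertex.

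Case $B_1\cup B_2$. The edges in $\mathcal{M}[B_1\cup B_2, D]$ are not used in (\ref{eq: M[B]}), (\ref{eq: M[B12,D]}) or (\ref{eq: M[Y]}): the first only counts pairs inside $B\cup\{u_1,u_2\}$, the second only pairs in $B_{12}\times D$, and the third only pairs inside $D$. So if $|(B_1\cup B_2)\setminus B'| \ge 3Cn^{0.6}$, we gain at least $3Cn^{0.6}\cdot\frac{1}{3}n^{0.9} = Cn^{1.5}$ extra missing edges, which is a contradiction.

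Case $B_{12}$; this is the main obstacle. The bound (\ref{eq: M[B12,D]}) already assumes the most generous degree profile $d_D$ on $B_{12}$: a staircase $d, d-1,\dots$ down to $\gamma$, with all remaining vertices of $B_{12}$ at degree $\gamma$. Now $\gamma \ge d - 2Cn^{0.5}$, and the staircase has only $d-\gamma \le 2Cn^{0.5}$ steps, so every vertex of $B_{12}$ except at most $2Cn^{0.5}+1$ of them is allowed $d_D$ at least $\gamma \ge d - 2Cn^{0.5}$ in that estimate. Each vertex of $B_{12}\setminus B'$ has $d_D < \frac{1}{2}d + 2\ell$, which is at least $\frac{1}{2}d - 2Cn^{0.5} - 2\ell \ge \frac{1}{3}n^{0.9}$ below the degree the estimate allowed it (for $n$ large). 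To make this rigorous, redo the computation in (\ref{eq: M[B12,D]}): order the vertices of $B_{12}$ by $d_D$ and use the same maximality of $\gamma$ to bound $\sum_{v\in B_{12}} d_D(v)$ term by term. This lowers the upper bound on $e(G[B_{12},D])$ by at least $(|B_{12}\setminus B'| - 2Cn^{0.5} - 1)\cdot\frac{1}{3}n^{0.9}$. If $|B_{12}\setminus B'| \ge 3Cn^{0.6}$, this gain is of order $Cn^{1.5}$, again a contradiction.

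Combining the two cases, $|B\setminus B'| < 6Cn^{0.6}$, as claimed. The delicate point is the bookkeeping in the $B_{12}$ case: we must make sure the extra deficit is measured against the specific profile that (\ref{eq: M[B12,D]}) assumes, and not double-counted against the staircase term.
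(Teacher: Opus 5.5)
Your proof is correct, but it is organized differently from the paper's.

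\textbf{The paper's route.} The paper does not split $B$. It bounds the missing pairs between $B\setminus B'$ and $D$ from below by $6Cn^{0.6}\cdot\frac13 n^{0.9}$ all at once. It then notes that the estimate being replaced is itself small: $b_{12}d-(d+(d-1)+\cdots+\gamma+\cdots+\gamma)\le (d-\gamma)b_{12}=O(n^{1.5})$ by Claim~\ref{claim: lower bound of gamma y}. Finally it uses the new lower bound on $|\mathcal{M}[B,D]|$ in place of (\ref{eq: M[B12,D]}). This is a ``take the larger estimate'' argument. It is shorter and needs no refinement of the staircase. However, it requires the new count to beat $(d-\gamma)b_{12}$ by a superlinear margin, so it is sensitive to constants. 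As written, the paper's bound $(d-\gamma)b_{12}\le Cn^{1.5}$ is off by about a factor of $2$, since Claim~\ref{claim: lower bound of gamma y} only gives $d-\gamma\le 2Cn^{0.5}$. This is easily repaired.

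\textbf{What your route buys.} Your gain is genuinely additive. Pairs in $(B_1\cup B_2)\times D$ are disjoint from everything counted in (\ref{eq: M[B]}), (\ref{eq: M[B12,D]}) and (\ref{eq: M[Y]}). For $B_{12}$ you sharpen the staircase bound itself. So any superlinear gain suffices, and no comparison with $(d-\gamma)b_{12}$ is needed.

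\textbf{Minor remarks.}
\begin{enumerate}
\item In the sorted bound $x_i\le\max(d-i+1,\gamma)$, every slot is at least $\gamma$. So each vertex of $B_{12}\setminus B'$ contributes a deficit of at least $\gamma-\frac12 d-2\ell$, and you do not need to discard $2Cn^{0.5}+1$ staircase vertices.
\item Both per-vertex gains are at least $\frac13 n^{0.9}$ and add to the existing estimates. You could therefore sum them over all of $B\setminus B'$ and drop the case split entirely.
\item Your opening sentence says ``about $n^{1.1}$'', but your computation gives order $n^{1.5}$; either amount suffices.
\end{enumerate}
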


\begin{proof}
	For each $w \in B\setminus B'$, it causes at least $\frac{1}{2}d - 2\ell \ge \frac{1}{3}n^{0.9}$ missing edges in $G[B,D]$.

	Note that in (\ref{eq: M[B12,D]}),
	\begin{equation*}
	\begin{aligned}
				 & b_{12}d - (d + (d-1) + \ldots + \gamma + \ldots + \gamma )\\
		\le & (d-\gamma)b_{12}\\
		\le & Cn^{1.5},
	\end{aligned}
	\end{equation*}
	where the last inequality is from Claim~\ref{claim: lower bound of gamma y}.

	Suppose otherwise, $|B \setminus B' | >6C n^{0.6}$. By Claim \ref{claim: lower bound of D} and $\frac{1}{2}d - 2\ell \ge \frac{1}{3}n^{0.9}$,
	 we have that
	\begin{equation}\label{eq: M[B,D] 2}
	\begin{aligned}
		|\mathcal{M}[B,D]| & \ge 6C n^{0.6} \times\frac{1}{3}n^{0.9} \\
& \ge  C n^{1.5} +  b_{12}d - (d + (d-1) + \ldots + \gamma + \ldots + \gamma )\\
		& \ge  C n^{1.5} + \binom{b_{12}+1}{2} - \binom{\gamma}{2} - 2nc - 2n.\\
		% \le & b_{12}d - (d + (d-1) + \ldots + \gamma + \ldots + \gamma ).
	\end{aligned}
	\end{equation}
We replace (\ref{eq: M[B12,D]}) with (\ref{eq: M[B,D] 2}), and combine (\ref{eq: M[B]}), (\ref{eq: M[B,D] 2}) and (\ref{eq: M[Y]}), we have that
	\begin{equation*}
	\begin{aligned}
		|\mathcal{M}| &\ge n^2 + c^2 - 32\ell n + C n^{1.5}.
	\end{aligned}
	\end{equation*}
	Since $C$ is large enough, we have $|\mathcal{M}| > n^2$, a contradiction with (\ref{eq: missing edges}).
\end{proof}

Define
\[
D' = \left\{w \in D \mid d_B(w) \ge \frac{1}{2} b +2\ell \right\}.
\]

\begin{claim}\label{claim: D setminus D'}
	$|D\setminus D'| \le C n^{0.6}$, where $C = C(\ell)$ is given in Claim \ref{claim: lower bound of gamma y}.
\end{claim}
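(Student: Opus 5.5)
We argue by contradiction, in the same way as in Claim~\ref{claim: B setminus B'}, exchanging the roles of $B$ and $D$. Suppose that $|D\setminus D'| > Cn^{0.6}$. Every $w\in D\setminus D'$ satisfies $d_B(w) < \frac12 b+2\ell$, so it has more than $\frac12 b-2\ell$ non-neighbours in $B$. By (\ref{eq: using c}) and Claim~\ref{claim: lower bound of gamma y} we have $b = 2n+2c-b_{12}\ge n$, because $b_{12}\le n+c$. Hence each such $w$ contributes at least $\frac13 n$ missing edges to $\mathcal{M}[B,D]$. This already gives $|\mathcal{M}[B,D]|\ge \frac13 C n^{1.6}$.

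The next step is to compare this with the bound actually used in the global count. The only place where $\mathcal{M}[B,D]$ enters the combination of (\ref{eq: M[B]}), (\ref{eq: M[B12,D]}) and (\ref{eq: M[Y]}) leading to (\ref{eq: original extra linear term}) is the estimate (\ref{eq: M[B12,D]}) for $\mathcal{M}[B_{12},D]$. As shown in the proof of Claim~\ref{claim: B setminus B'}, the quantity subtracted there satisfies
\[
b_{12}d-(d+(d-1)+\cdots+\gamma+\cdots+\gamma)\le (d-\gamma)b_{12}\le Cn^{1.5}
\]
by Claim~\ref{claim: lower bound of gamma y}. So the right-hand side of (\ref{eq: M[B12,D]}) is at most $Cn^{1.5}$ up to lower-order terms. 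Since $\frac13 Cn^{1.6}\ge 2Cn^{1.5}$ for large $n$, we obtain
\[
|\mathcal{M}[B,D]|\ge Cn^{1.5}+\binom{b_{12}+1}{2}-\binom{\gamma}{2}-2nc-2n .
\]
This is exactly (\ref{eq: M[B,D] 2}), and it follows verbatim.

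We then replace (\ref{eq: M[B12,D]}) by this bound and combine it with (\ref{eq: M[B]}) and (\ref{eq: M[Y]}). The sets $B\cup\{u_1,u_2\}$, the pair $(B,D)$, and $D$ index disjoint families of missing edges, so the three bounds add up. This gives $|\mathcal{M}|\ge n^2+c^2-32\ell n+Cn^{1.5}>n^2$, which contradicts (\ref{eq: missing edges}).

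The main point to check is that the lower bound per vertex is linear in $n$ rather than of order $n^{0.9}$. This is what makes the threshold $Cn^{0.6}$ suffice, and it comes from $b\ge n$: we need $\frac12 b-2\ell\ge \frac13 n$, which holds since $b\ge n$ and $n$ is large.

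A second point to check is whether the $Cn^{1.6}$ count should go only to $\mathcal{M}[B_{12},D]$. One option is to split the missing edges at each $w$ between $B_{12}$ and $B_1\cup B_2$. The simpler route is the one above: use the total $|\mathcal{M}[B,D]|$ in place of the $B_{12}$-only estimate. This is valid because neither (\ref{eq: M[B]}) nor (\ref{eq: M[Y]}) uses any edges between $B$ and $D$, so nothing is double counted.
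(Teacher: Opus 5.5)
Your proposal is correct and follows the paper's own abbreviated proof. The paper likewise observes that each $w\in D\setminus D'$ contributes at least $\frac12 b-2\ell\ge\frac13 n$ missing edges to $G[B,D]$, then reruns the argument of Claim~\ref{claim: B setminus B'}, replacing (\ref{eq: M[B12,D]}) by the resulting bound on $|\mathcal{M}[B,D]|$ to force $|\mathcal{M}|>n^2$. You have supplied the details the paper omits: the bound $b=2n+2c-b_{12}\ge n+c\ge n$, and the disjointness of the three families of missing edges being added. Strictly, $(d-\gamma)b_{12}$ is only bounded by about $2Cn^{1.5}$ rather than $Cn^{1.5}$, but this is harmless since $\frac13 Cn^{1.6}$ dominates any $O(Cn^{1.5})$ term.
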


\begin{proof}
	Similarly, each $w \in D\setminus D'$ causes $\frac{1}{2} b - 2\ell \ge \frac{1}{3}n$ missing edges in $G[B, D]$.
	By a similar argument as in the proof of Claim~\ref{claim: B setminus B'}, we can prove the statement.
	The details are omitted here.
\end{proof}

We further partition $D\setminus D'$ into $D_1$ and $D_2$ as follows:
\begin{equation*}
\begin{aligned}
	D_1 &= \{w \in D \setminus D' \mid \left|N(w) \cap D' \right| \ge 2 \},\\
	D_2 &= D \setminus (D' \cup D_1).
\end{aligned}
\end{equation*}

Claims~\ref{claim: B setminus B'} and \ref{claim: D setminus D'} follow from counting the missing edges in $G$.
Using the property that no vertices with same degree are connected by a $P_{2\ell+2}$, we have the following claim.

\begin{claim}\label{claim: empty sets}
	For every vertex $t \in B'$, $N(t)\cap B=\emptyset$. Moreover
	$G[D']$, $G[D_1]$, and $G[D_1,B]$ are all empty.
\end{claim}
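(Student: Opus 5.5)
The idea is the one used in the proof of Claim~\ref{claim: lower bound of gamma y}. Assume one of the forbidden edges exists and use it to build a path of length $2\ell+1$ from $u_1$ to $u_2$, which contradicts $d(u_1)=d(u_2)=\beta$.

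\textbf{Tools.} The key fact is that $B'$ and $D'$ are ``majority-adjacent'' to $D$ and $B$ respectively.
\begin{itemize}
\item Any two vertices of $B'$ have at least $4\ell$ common neighbours in $D$, since each has more than $\frac12 d+2\ell$ neighbours there.
\item Any two vertices of $D'$ have at least $4\ell$ common neighbours in $B$.
\item A vertex of $D'$ has at least $2\ell$ neighbours in $B_1\cup B_{12}$, and likewise in $B_2\cup B_{12}$. Indeed $b_1=b_2$ gives $|B_2|\le \frac12 b$, so $d_B(w)\ge\frac12 b+2\ell$ forces at least $2\ell$ neighbours outside $B_2$, and symmetrically outside $B_1$.
\end{itemize}
So we can always move $u_1\to B_1\cup B_{12}$, then alternate between $B$ and $D$ through fresh common neighbours, and end with $B_2\cup B_{12}\to u_2$. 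At each step there are at least $2\ell$ choices, which is more than the number of vertices already used, so distinctness is never a problem. A path $u_1,x_1,\dots,x_{2\ell},u_2$ with interior in $B\cup D$ has length $2\ell+1$. The interior may take any even number of vertices up to $2\ell$, so the endpoints lie in $B$ and all that matters is parity. We also have $d\ge n^{0.9}$ and $|D'|\ge d-Cn^{0.6}$ (Claims~\ref{claim: lower bound of D} and~\ref{claim: D setminus D'}), so $D'$ is large.

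\textbf{Step 1: $N(t)\cap B=\emptyset$ for $t\in B'$.} Suppose $t\in B'$ is adjacent to $s\in B$.
\begin{itemize}
\item The walk $B\to D'\to B\to D'\to\cdots$ has interior of even length: every $B$–$D$ alternation adds two vertices. The single edge $ts$ lets us insert an odd shift.
\item Concretely, start at $u_1$ and go to some $x_1\in B_1\cup B_{12}$ adjacent to $s$, or take $s$ itself if $s\in N(u_1)$. Then use $s,t$ consecutively in the interior. Connect $t$ into $D'$, since $t$ has at least $\frac12 d+2\ell$ neighbours in $D$ and $D\setminus D'$ is small. Alternate $D'\to B\to D'$ as needed, and finish through a vertex of $B_2\cup B_{12}$ adjacent to $u_2$.
\item If $s$ is not a neighbour of $u_1$, route $u_1\to x\in B_1\cup B_{12}\to w\in D'\to s$. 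This needs $s$ to have a $D'$ neighbour; if $s\notin B'$, we swap the roles of $s$ and $t$.
\item Count the interior so it has exactly $2\ell$ vertices; the edge $st$ supplies the parity correction. Since $\ell\ge3$ there is enough room for the required detours.
\end{itemize}

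\textbf{Step 2: $G[D']$ is empty.} If $w_1w_2$ is an edge of $D'$, use the path $u_1\,x\,w_1\,w_2\,x'\,w_3\,x''\cdots u_2$. Here $x\in N(u_1)\cap N(w_1)\cap B$, the middle part alternates through common $B$-neighbours of consecutive $D'$ vertices, and the last $B$ vertex lies in $B_2\cup B_{12}$. This is exactly the $Z_2$ construction from the proof of Claim~\ref{claim: lower bound of gamma y}, with $B$ in place of $Y$ and $u_i$ in place of $v_i$. Choosing $\ell-1$ vertices of $D'$ gives length exactly $2\ell+1$.

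\textbf{Step 3: $G[D_1]$ is empty.} If $w\in D_1$, then $w$ has two neighbours $a,a'\in D'$. An edge $ww'$ inside $D_1$ gives the pattern $a\,w\,w'\,a''$, which again has the parity of the edge in Step 2: it replaces a single $D'$ vertex in the chain by the segment $a w w' a''$, which has an odd number of edges. The two $D'$-neighbours are what let us choose $a\neq a''$.

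\textbf{Step 4: $G[D_1,B]$ is empty.} If $w\in D_1$ is adjacent to $s\in B$, use the segment $a\,w\,s$ with $a\in D'$. This lets the path go from $D'$ to $B$ via two edges instead of one, shifting parity by one. Then close up as in Step 1.

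\textbf{Main obstacle.} The delicate point is exact length bookkeeping in Steps 1 and 4, when the ``odd'' edge touches $B$ at a vertex not adjacent to $u_1$ or $u_2$. I would handle this by always entering from $u_1$ through $B_1\cup B_{12}$, passing once into $D'$, and then attaching the odd gadget, adjusting the number of $D'$ vertices used to hit length $2\ell+1$. Since $\ell\ge3$, there is slack of at least one full $B$–$D'$ round trip.
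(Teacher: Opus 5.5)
Your overall strategy matches the paper's. You splice an odd gadget (a $B$--$B$ edge, a $D'$--$D'$ edge, the segment $a\,w\,w'\,a''$, or $a\,w\,s$) into a path that alternates through sets whose members pairwise share at least $4\ell$ neighbours, entering and leaving via $B_1\cup B_{12}$ and $B_2\cup B_{12}$. Steps 2--3 follow the paper's paths. In Step 1 you alternate through $D'$ (via Claim~\ref{claim: D setminus D'}) where the paper alternates through $B'\cap(B_2\cup B_{12})$ (via Claim~\ref{claim: B setminus B'}); either works.

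\textbf{The missing fact.} By definition $B=(N(u_1)\cup N(u_2))\setminus\{u_1,u_2\}$, so every vertex of $B$ is adjacent to $u_1$ or $u_2$. Using the symmetry $u_1\leftrightarrow u_2$, the $B$-endpoint of the gadget can always sit next to some $u_i$. This is exactly the paper's ``without loss of generality $t_1\in B_1\cup B_{12}$''. So your ``main obstacle'' does not exist, but your treatment of it is also wrong. The fallback in Step 1 (``if $s\notin B'$, swap the roles of $s$ and $t$'') fails as written. The route $u_1\,x\,w\,t\,s$ must then continue from $s$ into $D$, and a vertex outside $B'$ need not have any neighbour in $D$ at all.

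The correct finish is the edge $s\,u_2$, which exists because $s\notin N(u_1)$ forces $s\in B_2$. Take $u_1x_1a_1x_2a_2\cdots x_{\ell-1}a_{\ell-1}\,t\,s\,u_2$ with $a_i\in D'$ and $a_{\ell-1}\in N(t)$. It has $2\ell+2$ vertices and is just the mirror image of your main case. The same fact closes Step 4 when $s\notin B'$: relabel so that $s\in N(u_2)$ and take $u_1x_1a_1\cdots x_{\ell-1}a_{\ell-1}\,w\,s\,u_2$ with $a_{\ell-1}\in N(w)\cap D'$. Merely ``adjusting the number of $D'$ vertices'' cannot help once the path is stuck at $s$.

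\textbf{Counting slip in Step 2.} Replacing $v_i$ by $u_i$ in the $Z_2$ path removes two vertices, so $u_1x_1w_1w_2x_2\cdots w_{\ell-1}x_{\ell-1}u_2$ has length only $2\ell-1$. You need $\ell$ vertices $w_1,\dots,w_\ell\in D'$, as in the paper's $u_1t_1w_1w_2t_2\cdots w_\ell t_\ell u_2$.

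With these two repairs the argument is complete.
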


\begin{proof}
	Firstly, let us prove the first statement.
	Suppose $t_2\in B'$ with a neighbor $t_1 \in B$.
	Without loss of generality, we may assume that $t_1 \in B_1 \cup B_{12}$. Then $u_1t_1\in E(G)$~(see Figure~\ref{fig: B' has neighbor in B}).
	By Claim~\ref{claim: B setminus B'} and $\beta>n$, there exist distinct vertices $t_3,\ldots,t_{\ell+1} \in (B_{12}' \cup B_2')\setminus\{t_1,t_2\}$.
	By the definition of $B'$, for every $i=2,3,\ldots,\ell$, $t_i$ and $t_{i+1}$ have a common neighbor $w_i\in D$ such that the vertices $w_2,w_3,\ldots,w_{\ell}$ are all different.
	Then $u_1$ and $u_2$ are connected by a $P_{2\ell+2}=u_1t_1t_2w_2\ldots t_{\ell}t_{\ell+1}u_2$, a contradiction.

	\begin{figure}[h]
		\begin{minipage}{0.5\textwidth}
			\centering
			\includegraphics[width=\textwidth]{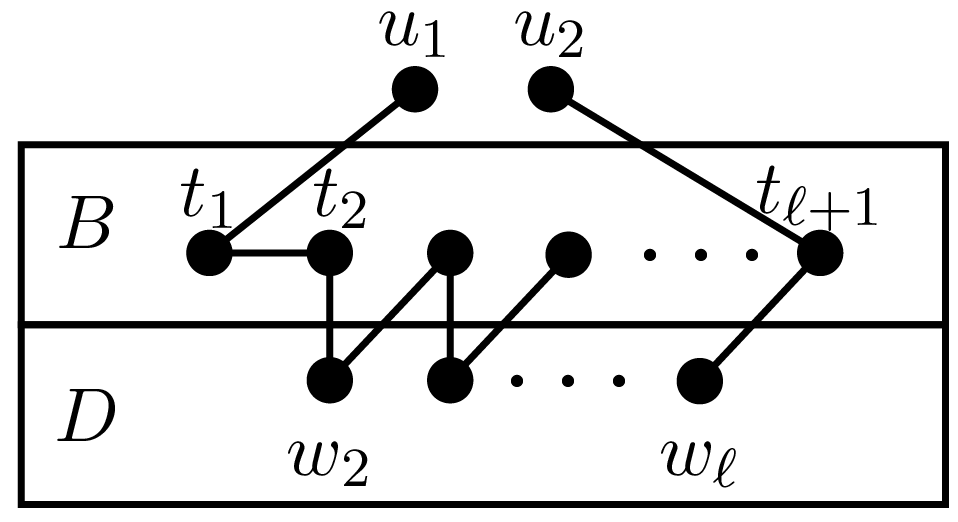}
			\caption{The case when $t_2\in B'$ has a neighbor $t_1\in B$.}\label{fig: B' has neighbor in B}
		\end{minipage}
		\begin{minipage}{0.5\textwidth}
			\centering
			\includegraphics[width=\textwidth]{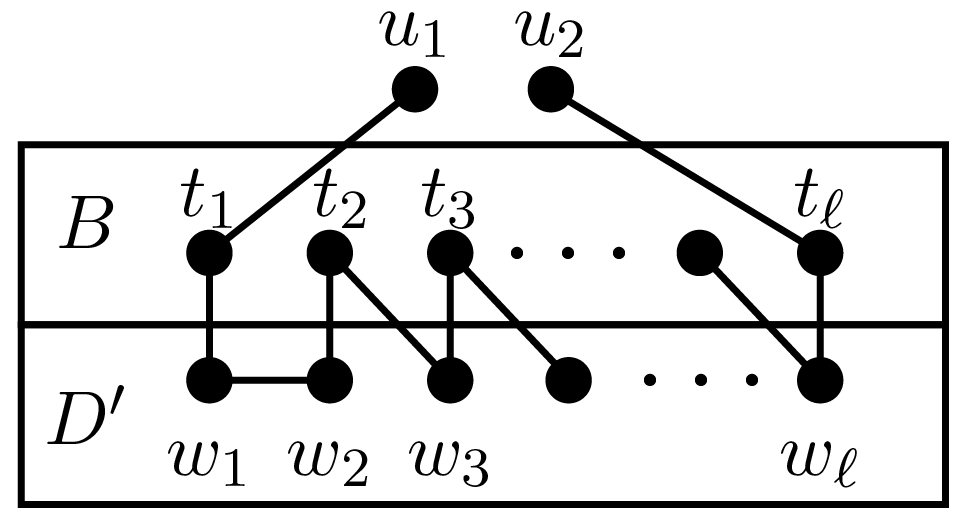}
			\caption{The case when $G[D']$ is not empty with an edge $w_1w_2$.}\label{fig: G[D'] empty}
		\end{minipage}
	\end{figure}
	\begin{figure}
		\begin{minipage}{0.5\textwidth}
			\centering
			\includegraphics[width=\textwidth]{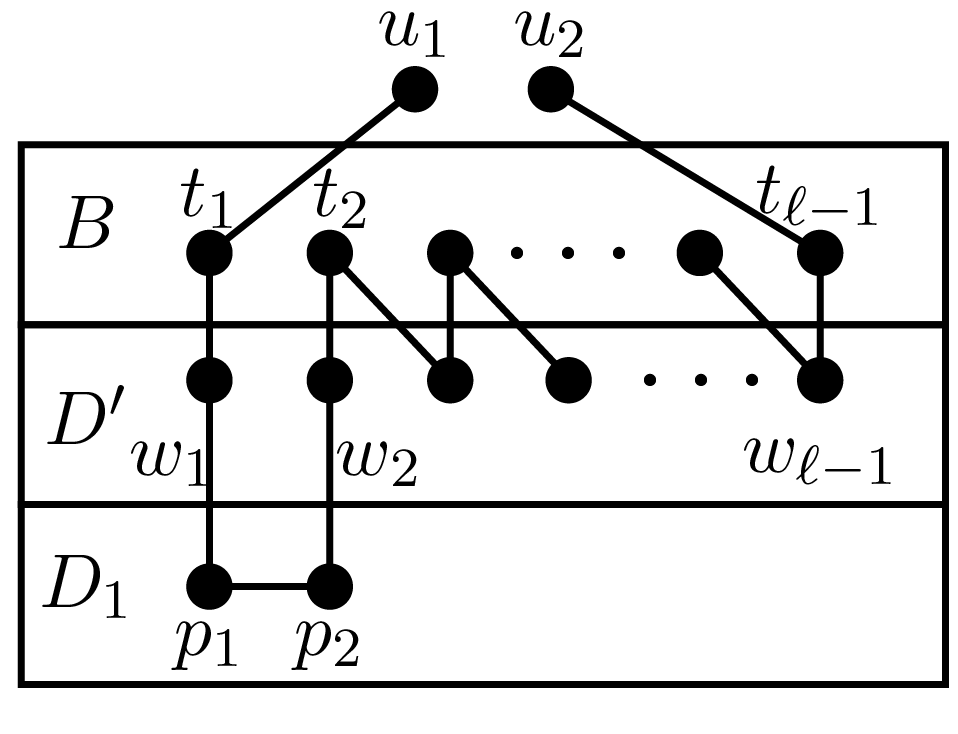}
			\caption{The case when $G[D_1]$ is not empty.}\label{fig: G[D_1] empty}
		\end{minipage}
		\begin{minipage}{0.5\textwidth}
			\centering
			\includegraphics[width=\textwidth]{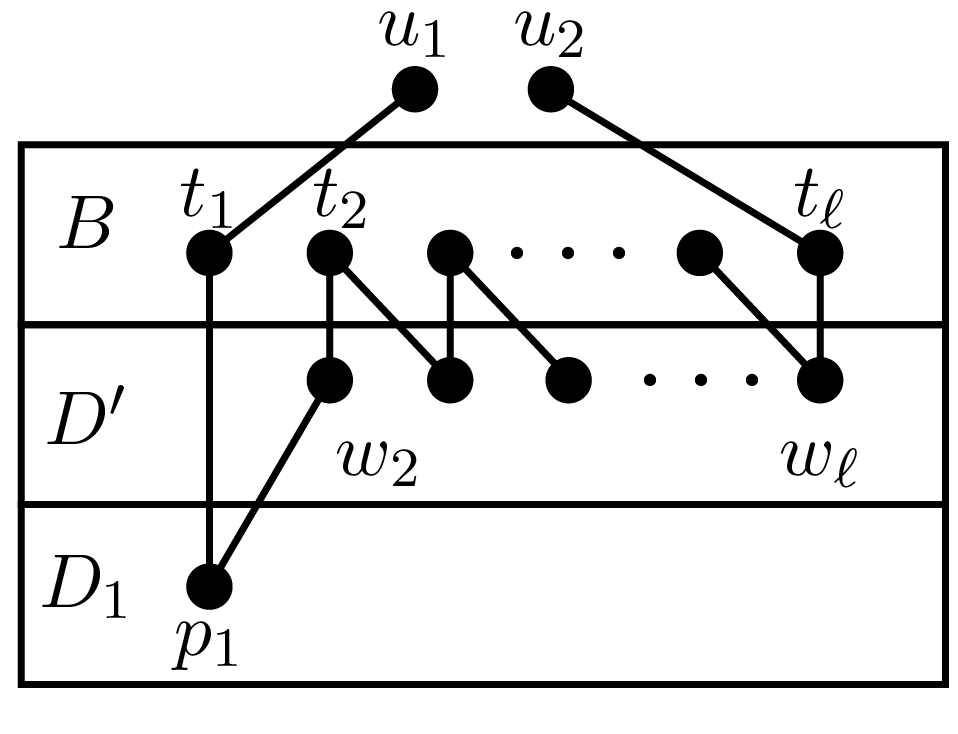}
			\caption{The case when $G[D_1,B]$ is not empty.}\label{fig: G[D_1,B] empty}
		\end{minipage}
	\end{figure}

	Secondly, we claim that $G[D']$ is empty.
	Suppose  there exist two adjacent vertices $w_1,w_2 \in D'$~(see Figure~\ref{fig: G[D'] empty}).
	By the definition of $D'$, we can assume $w_1$ has a neighbor $t_1$ in $B_{12} \cup B_1$.
	Moreover, by Claims~\ref{claim: lower bound of D} and \ref{claim: D setminus D'}, there exist vertices $w_3,\ldots,w_{\ell} \in D'\setminus\{w_1,w_2\}$. By the definition of $D'$, $w_i,w_{i+1}$ have a common neighbor $v_i$ in $B$ for each $i=2,3,\ldots,\ell-1$ and the vertices $t_1,t_2,\ldots,t_{\ell-1}$ are all different.
	Finally, $w_{\ell}$ has a neighbor $t_{\ell}$ in $B_{12} \cup B_2$ avoiding the vertices $t_1,t_2,\ldots, t_{\ell-1}$.
	Then $u_1$ and $u_2$ are connected by a $P_{2\ell+2}=u_1t_1w_1w_2t_2\ldots w_{\ell}t_{\ell}u_2$, a contradiction.
	Then we conclude that $G[D'] = \emptyset$.

	Then, we claim that $G[D_1]$ is empty.
	Suppose  there exist two adjacent vertices $p_1,p_2 \in D_1$~(see Figure~\ref{fig: G[D_1] empty}).
	By the definition of $D_1$, $p_1$ has a neighbor $w_1 \in D'$, and $p_2$ has a neighbor $w_2 \in D'$ such that $w_1 \neq w_2$.
	Similarly as the case before, there exist vertices $w_3,\ldots,w_{\ell-1} \in D'$ and $t_1,t_2,\ldots,t_{\ell-1} \in B$ such that $u_1,u_2$ are connected by a $P_{2\ell+2}=u_1t_1w_1p_1p_2w_2t_2\ldots w_{\ell-1}t_{\ell-1}u_2$, a contradiction.

	By the same argument, we can prove that $G[D_1,B]$ is empty~(see Figure~\ref{fig: G[D_1,B] empty}).
\end{proof}

Now we are ready to conclude the $\beta > n$ case.
% Let $E_{D_2, B'}$ be the set of edges in $G$ with one endpoint in $D_2$ and the other endpoint outside of $D_2$.
By the above claims, we can calculate the sum of degrees of vertices in $G$ as follows:
\begin{enumerate}
	\item The sum of degrees of vertices in $B'$ is at most $b'(d' + d_2+2)$ by Claim~\ref{claim: empty sets}.
	\item The sum of degrees of vertices in $B\setminus B'$ is at most $(b-b')(\frac{1}{2}d + 2\ell + 2 + b-b')$ by the definition of $B'$.
	\item The sum of degrees of vertices in $D'$ is at most $d'(b + d_1 + e(G[D_2, D'])) \le d'(b+d_1) + d_2$ by Claim~\ref{claim: empty sets}.
	\item The sum of degrees of vertices in $D_1$ is at most $d_1(d' + d_2)$ by Claim~\ref{claim: empty sets}.
	\item The sum of degrees of vertices in $D_2$ is at most $d_2(1 +d_2 + d_1 + \frac{1}{2}b +2\ell)$ by the definition of $D'$ and $D_2$.
\end{enumerate}

Put the above together, then we have that
\begin{equation*}
\begin{aligned}
	S &\le 2\beta + b'(d' + d_2 +2) + (b-b')\left(\frac{1}{2}d + 2\ell+ 2 + b-b'\right) \\
	&\quad + d'(b+d_1) + d_1(d'+d_2) + d_2\left(1 + d_2 + d_1 + \frac{1}{2}b + 2 + 2\ell\right)\\
	& \le 2(b-\alpha) + b(d - d_1 +2) + (b-b') \left(-\frac{1}{2}d + 2\ell + d_1 + b-b'\right)  \\
	& \quad + (d'+d_2)(b+2d_1) + d_2\left(-\frac{1}{2}b + d_2 +2+2\ell\right) \\
	& \le 2(b-\alpha) + b(d - d_1 +2) + (d'+d_2)(b+2d_1),
\end{aligned}
\end{equation*}
where the last inequality holds since both $-\frac{1}{2}d + d_2 + 2\ell + b-b'$ and $-\frac{1}{2}b + d_2 + 2 + 2\ell$ are negative by Claims~\ref{claim: lower bound of D} and \ref{claim: D setminus D'}.
Let $x_1 = d' +d_2 = d-d_1, x_2 = d_1 + b$. Then we have that
\begin{equation*}
\begin{aligned}
	S & \le 2x_1 x_2 + 4b -2\alpha
	 \le 2x_2(x_1+2) - 2\alpha.
\end{aligned}
\end{equation*}
Since $x_2 + x_1 + 2 = 2n+1$, we have that $S \le 2n(n+1)$.
From the inequalities, $S = 2n(n+1)$ holds only when $\alpha =0$, $b=b'$ and $\{b, d'+2\} = \{n,n+1\}$.
Then by Claim~\ref{claim: empty sets}, it is easy to show that $G=K_{n,n+1}$.

% \subsection{try $\ell=3$ new method}

% When $\ell = 3$, we have that

% \begin{equation*}
% \begin{aligned}
% 	e(G) & \le 2 \beta - \alpha + e(G[B_1]) + e(G[B_2]) + 4n + e(G[D_2]) + e(G[D_1, D']) + e(G[D_2, B \cup D'
% 	\cup D_1])
% 	\\
% 	& \le 6n + \binom{b_1}{2} + \binom{b_2}{2} + d' b + \binom{d_2}{2} + d_1d' + d_2\left(\frac{1}{2}b + d_1 + 2\ell \right)\\
% 	& \le 10\ell n^{1.5} + n^2 + n(-d+d'-d_1) + d^2 + \frac{d'^2}{2} - \frac{3dd'}{2} - \frac{d_1^2}{2} + d_1(d/2+d')
% \end{aligned}
% \end{equation*}

% ------------------------------------------------------------
\section{Proof of Theorem \ref{thm: main 2n+1} in the \texorpdfstring{$\beta \le n$}{β ≤ n} case}\label{sec: beta < n}

\begin{claim}\label{claim: Delta > n + sqrt n} We have
	$\Delta \ge n + \sqrt{n}$.
\end{claim}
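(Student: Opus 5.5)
We argue by contradiction, using only the degree sum (\ref{eq: sum of degrees lower bound}) and the definition of $\beta$. Suppose $\Delta < n+\sqrt{n}$.

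\emph{Step 1: degrees above $n$ are distinct.} Since $\beta \le n$ and $\beta$ is the largest value taken by at least two degrees, every value larger than $n$ occurs as the degree of at most one vertex. Let $K$ be the set of vertices of degree larger than $n$, and write $k=|K|$.

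\emph{Step 2: size and degree sum of $K$.} The degrees of the vertices of $K$ are distinct integers in $\{n+1,\dots,\Delta\}$. Hence $k \le \Delta-n < \sqrt{n}$. They are also at most $\Delta, \Delta-1, \dots, \Delta-k+1$ in decreasing order, so
\[
\sum_{v\in K} d(v) \le k\Delta - \binom{k}{2} \le k(n+\sqrt{n}) - \frac{k(k-1)}{2}.
\]

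\emph{Step 3: bound the total degree sum.} Each of the remaining $2n+1-k$ vertices has degree at most $n$. Therefore
\[
S \le (2n+1-k)n + k(n+\sqrt{n}) - \frac{k(k-1)}{2} = 2n^2+n + k\sqrt{n} - \frac{k^2}{2} + \frac{k}{2}.
\]

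\emph{Step 4: contradiction.} As a function of $k$, the quantity $k\sqrt{n}-\frac{k^2}{2}+\frac{k}{2}$ is at most $\frac{1}{2}\left(\sqrt{n}+\frac12\right)^2 = \frac{n}{2}+\frac{\sqrt n}{2}+\frac18$ for every real $k$. This is smaller than $n$ for $n\ge 2$. So $S < 2n^2+2n$, contradicting (\ref{eq: sum of degrees lower bound}). Hence $\Delta \ge n+\sqrt{n}$.

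The only point needing care is Step 1: the distinctness of degrees above $n$ comes from the maximality of $\beta$ combined with $\beta\le n$. With that in hand, the rest is a direct computation and I expect no real obstacle. The bound even has slack: the argument would still give a contradiction under the weaker assumption $\Delta < n+c\sqrt{n}$ for any $c<\sqrt2$.
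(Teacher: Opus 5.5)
Your argument is correct and is essentially the paper's proof: both use that every degree above $\beta\le n$ occurs at most once, so the degree sum is at most a staircase $\Delta,\Delta-1,\dots$ followed by copies of at most $n$, which for $\Delta<n+\sqrt n$ gives $S\le 2n^2+\tfrac32 n+O(\sqrt n)<2n^2+2n$. The only difference is cosmetic: you threshold directly at $n$ and maximize over $k=|K|$, whereas the paper thresholds at $\beta$, takes the full staircase down to $\beta$, and then substitutes $\beta=n$ and $\Delta=n+\sqrt n$.
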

\begin{proof}
	Suppose  $\Delta < n + \sqrt{n}$. By the definition of $\beta$, we have that
	\begin{equation*}
	\begin{aligned}
		S & \le \Delta + (\Delta -1 ) + \ldots + \beta + \beta+ \ldots + \beta\\
			& \le \frac{(\Delta+\beta)(\Delta - \beta + 1)}{2} + \beta(2n+1 - (\Delta -\beta + 1))\\
			& \le \frac{(\Delta+n)(\Delta - n + 1)}{2} + n(3n - \Delta  )\\
			& \le \frac{(2n+\sqrt{n})(\sqrt{n}+1)}{2} + n(2n -\sqrt{n})\\
			& \le 2n^2 + \frac{3}{2}n + \frac{1}{2}\sqrt{n} < 2n^2 + 2n,
	\end{aligned}
	\end{equation*}
	 when $n$ is large enough, a contradiction with (\ref{eq: sum of degrees lower bound}).
\end{proof}

	\begin{figure}
		\centering
		\includegraphics[width=0.7\textwidth]{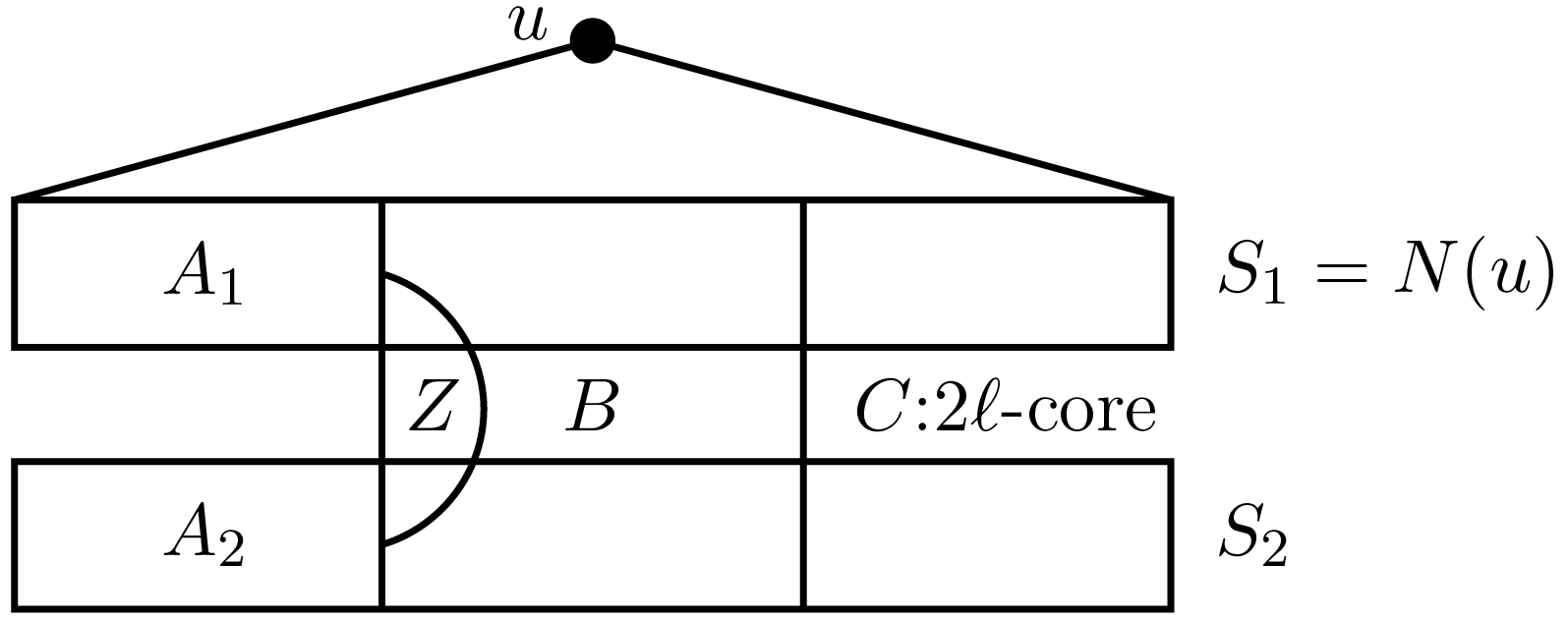}
		\caption{An illustration of the vertex $u$ and the sets $S_1,S_2,A,B,C,Z$.}\label{fig: Delta > n}
	\end{figure}

Let $u\in V(G)$ with $d(u)=\Delta$.
Let $S_1 = N(u), S_2 = V \setminus N[u]$.
It is clear that $|S_1| = \Delta, |S_2| = 2n-\Delta$. Set

\begin{equation*}
\begin{aligned}
	& A_1 = \{ v \in S_1 \mid d(v) \le 2n-\Delta+2\ell \}, \\
	& A_2 = \{ v \in S_2 \mid d(v) \le 2n-\Delta+2\ell-1 \}. \\
\end{aligned}
\end{equation*}
Let $A = A_1 \cup A_2$, $G_1 = G[V\setminus \left(  A \cup \{u\} \right)]$.
The \textit{$t$-core} of  $G$ is the largest induced subgraph of $G$ that has minimum degree at least $t$.
It can be obtained by repeatedly removing vertices with degree less than $t$ from $G$.

Let $C$ be the $2\ell$-core of $G_1$, and $B$ = $V(G_1) \setminus  V(C)$.
We use lower case letters to denote the size of each set, for example, $a_1 = |A_1|$, $a_2 = |A_2|$, $b = |B|$, $c = |C|$, etc. Then $a+b+c=2n$.

\begin{claim}\label{claim: different degrees in C}
	vertices in $V(C)\cup \{u\}$ have different degrees in $G$.
\end{claim}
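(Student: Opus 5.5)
The plan is to show that any two distinct vertices of $V(C)\cup\{u\}$ with equal degree would be joined by a path of length $2\ell+1$, contradicting the hypothesis on $G$. The vertex $u$ is handled by a degree comparison, and pairs inside $C$ by an explicit path construction.

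\emph{The vertex $u$.} By Claim~\ref{claim: Delta > n + sqrt n}, $d(u)=\Delta\ge n+\sqrt n>n\ge\beta$. Any degree shared by two vertices is at most $\beta$, by the maximality in the definition of $\beta$. Hence no other vertex has degree $\Delta$, and it suffices to treat two distinct vertices $x,y\in V(C)$ with $d(x)=d(y)$.

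\emph{Every vertex of $C$ has at least $2\ell$ neighbours in $S_1$.} Take $v\in V(C)$. Since $C\subseteq G_1$, we have $v\notin A\cup\{u\}$.
\begin{itemize}
\item If $v\in S_2$, then $d(v)\ge 2n-\Delta+2\ell$. Its neighbours outside $S_1$ lie in $S_2\setminus\{v\}$, a set of size $2n-\Delta-1$ (note $v$ is not adjacent to $u$). So $d_{S_1}(v)\ge 2\ell+1$.
\item If $v\in S_1$, then $d(v)\ge 2n-\Delta+2\ell+1$. Its neighbours outside $S_1$ lie in $S_2\cup\{u\}$, a set of size $2n-\Delta+1$. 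So $d_{S_1}(v)\ge 2\ell$.
\end{itemize}

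\emph{Building the path.} We construct $x=x_0x_1\cdots x_k\,s\,u\,t\,y$ with $k=2ℓ-3\ge 0$. This has length $k+4=2\ell+1$, so it is a $P_{2\ell+2}$ joining $x$ and $y$.
\begin{itemize}
\item \emph{The segment $x_0x_1\cdots x_k$.} Build it greedily inside $C$, avoiding $y$. When choosing $x_{i+1}$, the forbidden vertices are $x_0,\dots,x_{i-1}$ and $y$, which is at most $i+1\le 2\ell-2$ vertices. Since $C$ has minimum degree at least $2\ell$, a valid choice always exists.
\item \emph{The vertex $s$.} Choose $s\in N(x_k)\cap S_1$ outside $\{x_0,\dots,x_{k-1},y\}$. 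This forbids $2\ell-2$ vertices, while $d_{S_1}(x_k)\ge 2\ell$.
\item \emph{The vertex $t$.} Choose $t\in N(y)\cap S_1$ outside $\{x_0,\dots,x_k,s\}$. This forbids $2\ell-1$ vertices, while $d_{S_1}(y)\ge 2\ell$.
\end{itemize}
Both $s$ and $t$ lie in $S_1=N(u)$, so $su,ut\in E(G)$. Also $u\notin V(C)\cup S_1$, so $u$ is distinct from every other vertex of the path. Hence all vertices are distinct, and the path joins two vertices of equal degree, which is a contradiction.

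\emph{Main obstacle.} The delicate step is the count of neighbours in $S_1$. It relies on the exact thresholds $2n-\Delta+2\ell$ and $2n-\Delta+2\ell-1$ in the definitions of $A_1$ and $A_2$, and the final choice of $t$ has only one spare vertex. I would double-check the off-by-one accounting there: whether $u$ is counted inside $N(v)$ for $v\in S_1$, and that $|S_2|=2n-\Delta$.
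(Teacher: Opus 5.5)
Your proof is correct and follows the same route as the paper. You build a greedy path of $2\ell-2$ vertices inside the $2\ell$-core, starting at $x$ and avoiding $y$, and then close it via $x_k\,s\,u\,t\,y$ through $S_1=N(u)$, using that every core vertex has at least $2\ell$ neighbours in $S_1$. Your version is slightly more careful than the paper's in two places: the paper writes the core segment as $v_1u_2\cdots u_{2\ell-3}$, which as written is one vertex short of a path of length $2\ell+1$, and it does not explicitly rule out $u$ via $\Delta>n\ge\beta$ as you do.
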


\begin{proof}
	Suppose there exist two vertices $v_1,v_2 \in V(C)\cup \{u\}$ such that $d(v_1)=d(v_2)$.
	By the definition of a $2\ell$-core and Claim \ref{claim: Delta > n + sqrt n}, there is a path $v_1u_2u_3\ldots u_{2\ell-3}$ in $C$ avoiding $v_2$.
	By the definition of $A$, we have that $d(u_{2\ell-3}) > 2n-\Delta+2\ell$ when $u_{2\ell -3} \in A_1$, and $d(u_{2\ell-3}) > 2n-\Delta+2\ell-1$ when $u_{2\ell -3} \in A_2$.
	In either case, $u_{2\ell-3}$ has at least $2\ell$ neighbors in $S_1$ by $|S_2| = 2n-\Delta$.
	Let $w_1$ be a neighbor of $u_{2\ell-3}$ in $S_1$ avoiding the path and $v_2$.
	Similarly, $v_2$ has a neighbor $w_2$ in $S_1$ avoiding the path, $v_1$ and $w_1$.
	Then $v_1$ and $v_2$ are connected by a $P_{2\ell+2}=v_1u_2u_3\ldots u_{2\ell-3}w_1u w_2v_2$, a contradiction.
\end{proof}

Let $E_B$ be the set of edges in $G_1$ with at least one endpoint in $B$ if $B\not=\emptyset$; otherwise let $E_B=\emptyset$.
Then by the definition of $2\ell$-core, we have $|E_B| \le 2\ell b$. We assume $B\not=\emptyset$ and
let
\begin{equation*}
	Z = \{ v \in B \mid d_{G_1}(v) \ge b^{0.9}\}.
\end{equation*}

Note that each vertex in $Z$ corresponds to at least $b^{0.9}$ edges in $E_B$.
And each edge in $E_B$ corresponds at most two vertices in $Z$.
Therefore, we have $|Z| \le 4\ell b^{0.1}$.
Let $Y = B \setminus Z$.

\begin{claim}\label{claim: upper bound of a}
	$a \le n+n^{0.6}$.
\end{claim}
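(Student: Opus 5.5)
The plan is to argue by contradiction. Suppose $a = n + k$ with $k > n^{0.6}$, so that $|V\setminus(A\cup\{u\})| = b + c = n - k$. By Claim~\ref{claim: Delta > n + sqrt n} we have $2n-\Delta \le n-\sqrt n$, so every vertex of $A$ has degree at most $n-\sqrt n+2\ell$.

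Pure edge counting will not be enough. The graph with $A$ independent, completely joined to a clique on the rest, has about $1.5n^2$ edges and still satisfies every degree constraint defining $A$. So the forbidden-path property must be used on $A$ itself, not only through Claim~\ref{claim: different degrees in C}.

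\textbf{Step 1: many equal-degree pairs in $A$.} The degrees in $A$ take at most $n-\sqrt n+2\ell+1$ values, while $a = n+k$. Hence we can extract at least $(k+\sqrt n-2\ell-1)/2$ disjoint pairs $\{x,y\}\subseteq A$ with $d(x)=d(y)$. No such pair may be joined by a $P_{2\ell+2}$.

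\textbf{Step 2: the routing lemma.} Call a vertex $x\in A$ \emph{attached} if it has a neighbour in $V(C)$, or a neighbour in $S_1$ outside a small exceptional set. I will prove that if $x,y$ are both attached, then $x$ and $y$ are joined by a path of length exactly $2\ell+1$. The route copies the proof of Claim~\ref{claim: different degrees in C}:
\begin{itemize}
\item enter the core $C$;
\item walk inside $C$ using minimum degree $2\ell$ to adjust the length;
\item exit to $S_1$ using that core vertices have at least $2\ell$ neighbours in $S_1$;
\item pass through $u$ and come back to $y$.
\end{itemize}
For $x\in A_1$ one may also use the edge $xu$ directly. Parity and exact length have to be handled separately according to whether $x,y$ lie in $A_1$ or $A_2$. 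Here $\ell\ge 3$ is needed so that there is room to pad the path inside $C$. Granting the lemma, in each equal-degree pair at least one vertex is unattached. This gives at least about $(k+\sqrt n)/2$ unattached vertices in $A$.

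\textbf{Step 3: counting.} An unattached vertex has all its neighbours in $A\cup B$, apart from a bounded number of exceptions. Such a vertex is not adjacent to $u$, or it contributes no path through $S_1$.

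We then split the edge count as
\[
e(G)\le \sum_{v\in A} d(v) + e(C) + |E_B| + \Delta .
\]
We bound $|E_B|\le 2\ell b$. For $e(C)$ we use the distinct degrees of Claim~\ref{claim: different degrees in C}, which caps $\sum_{v\in C} d(v)$ by $2nc-\binom{c}{2}$. The set $Z$ of high-degree vertices of $B$ has size $O(\ell b^{0.1})$, so $Y=B\setminus Z$ consists of vertices with only $b^{0.9}$ neighbours in $G_1$. Edges between $A$ and $Y$ are therefore counted only from the $A$ side. The unattached vertices lose access to $C\cup S_1$, and this removes roughly $\Omega((k+\sqrt n)\cdot n)$ potential edges from the count. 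With $k>n^{0.6}$ this is meant to push $e(G)$ below $n^2+n$, contradicting~(\ref{eq: sum of degrees lower bound}).

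\textbf{Main obstacle.} I expect Step 2 to be the hardest part: producing a path of length exactly $2\ell+1$ in every configuration. The difficult cases are when both endpoints lie in $A_2$, or when they attach only through $B$-vertices of small degree. I would first try to route everything through $u$ and a long path inside the $2\ell$-core, handling parity by choosing whether to exit the core at an $S_1$-vertex or at $u$ directly.

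A second, genuine risk is the final count in Step 3. I have not checked that it closes: the crude bound $2e(G)\le \sum_{v\in A} d(v)+\sum_{v\notin A} d(v)$ only gives about $2.5n^2$. The contradiction must therefore come from the unattached vertices having degree much smaller than $n-\sqrt n$, or from a stronger structural conclusion about them. This is where I would expect to need a further refinement of Step 2.
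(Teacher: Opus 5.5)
Your proposal does not prove the claim. The routing lemma of Step~2 is unproven and not clearly true; for instance, the $2\ell$-core $C$ may be empty, leaving nothing to pad the path in. You also say yourself that the count in Step~3 has not been checked. More fundamentally, the premise behind all this machinery is false.

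Your example ($A$ independent, completely joined to a clique on the other $n+1-k$ vertices) violates the definition of $A$. There the vertices outside $A$ have degree $2n$, so $\Delta=2n$, and membership in $A$ forces degree at most $2\ell$. Your $A$-vertices instead have degree $n+1-k$. The mistake is that you decouple the two degree bounds:
\begin{itemize}
\item in Step~1 you replace $2n-\Delta+2\ell$ by $n-\sqrt n+2\ell$;
\item in Step~3 you bound degrees outside $A$ by $2n$ instead of by $\Delta$.
\end{itemize}
That is exactly why your crude count comes out far above $2n^2$.

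If you keep the coupling, plain degree counting closes at once, and this is the paper's entire proof. Each vertex of $B\cup C\cup\{u\}$ has degree at most $\Delta$, each vertex of $A$ has degree at most $2n-\Delta+2\ell$, and $b+c=2n-a$. Hence
\[
S\le \Delta(2n+1-a)+a(2n-\Delta+2\ell)=\Delta(2n+1)-2a(\Delta-n-\ell).
\]
By Claim~\ref{claim: Delta > n + sqrt n}, the coefficient of $a$ is negative. So $a>n+n^{0.6}$ gives $S\le \Delta(2n+1)-2(n+n^{0.6})(\Delta-n-\ell)$. This bound is decreasing in $\Delta$, and at $\Delta=n+\sqrt n$ it equals $2n^2+n+\sqrt n+2\ell n+2\ell n^{0.6}-2n^{1.1}<2n^2+n$ for large $n$. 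This contradicts (\ref{eq: sum of degrees lower bound}).

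Equivalently, in your notation $a=n+k$: pair each vertex outside $A$ with one in $A$, so each pair has combined degree at most $2n+2\ell$. Each of the remaining $2k-1$ vertices of $A$ has degree at most $n-\sqrt n+2\ell$. The resulting deficit, of order $k\sqrt n>n^{1.1}$, swamps the $O(\ell n)$ error.

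The forbidden-path property plays no role in this claim. Only $\Delta\ge n+\sqrt n$ and the edge count are used, so Steps~1--3 are unnecessary.
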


\begin{proof}
	Suppose  $a > n+n^{0.6}$.
	Then we have

	\begin{equation*}
	\begin{aligned}
		S &\le \Delta + \Delta (b+c) + (2n-\Delta + 2\ell)a \\
		& \le \Delta(2n+1) + a(2n-2\Delta + 2\ell)\\
		& \le \Delta(2n+1) + (n+n^{0.6})(2n-2\Delta + 2\ell)\\
		& \le (n+\sqrt{n})(2n+1) + (n+n^{0.6})(2n-2(n+\sqrt{n}) + 2\ell)\\
		& \le 2 n^{2} + n - 2 n^{1.1} + 2 \ell n^{0.6} + 2 \ell n + n^{0.5}\le 2n^2 + n
	\end{aligned}
	\end{equation*}
	 when $n$ is large enough, a contradiction  with (\ref{eq: sum of degrees lower bound}).
\end{proof}

% \begin{equation*}
% \begin{aligned}
% 	Y' = \{w \in Y \mid d(w) \le \frac{1}{2}a + 4\ell b^{0.9}\},\\
% 	Y'' = \{w \in Y \mid d(w) > \frac{1}{2}a + 4\ell b^{0.9}\},\\
% \end{aligned}
% \end{equation*}

% \begin{claim}\label{claim: c le 2a}
% 	$c \le 2a$.
% \end{claim}

% \begin{proof}
% 	Otherwise, we have $c > 2a$.
% 	Then
% 	\begin{equation*}
% 	\begin{aligned}
% 		S & \le \frac{(c+1)(\Delta -c)}{2} + 4\ell b + 2a(2n-\Delta + 2\ell)\\
% 	\end{aligned}
% 	\end{equation*}
% 	with respect to $a+b+c = 2n$, $n+\sqrt{n} \le \Delta \le 2n$.
% 	$\ell$ is a constant and $n$ is large compared to $\ell$.
% 	Then we claim $S \le 2n^2 + n$~(verified by Python, done).
% \end{proof}

% \begin{claim}\label{claim: lower bound a}
% 	$a \ge n/10$.
% \end{claim}

% \begin{proof}
% 	Suppose otherwise, $a < n/10$.
% 	By Claim~\ref{claim: c le 2a}, we have $c \le 2a < 2n/10$.
% 	Sum of degree is at most
% 	\begin{equation*}
% 	\begin{aligned}
% 		S & \le \frac{(c+1)(\Delta -c)}{2} + 4\ell b + 2a(2n-\Delta + 2\ell)\\
% 	\end{aligned}
% 	\end{equation*}
% 	with respect to $a+b+c = 2n$.
% 	Then we claim the sum of degree is at most $2n^2 + n$~(verified by Python, done).
% \end{proof}

We will finish the proof by considering two cases when $a < n$ and $ a \ge n$.

\textbf{Case 1:} $a  < n$.
In this case, let us consider the sum of degrees of vertices in $G$.
\begin{enumerate}
	\item For vertices in $V(C) \cup \{u\}$, they receive different degrees by Claim~\ref{claim: different degrees in C}. Then the sum of degrees is at most
	\begin{equation*}
	\begin{aligned}
		&\Delta + (\Delta -1) + \ldots + (\Delta - c)
	\le &\frac{(2\Delta -c)(c+1)}{2}.
	\end{aligned}
	\end{equation*}
	\item For vertices in $A$, the sum of degrees is at most $a(2n-\Delta + 2\ell)$ by the definition of $A$.
	\item For vertices in $B$,
	 $\sum_{v \in B} d(v) = \sum_{v \in B} (d_{G_1}(v) + (d(v) - d_{G_1}(v)))$. Note that for a fixed vertex $v \in B$, $d(v) - d_{G_1}(v)$ is at most $a+1$.
	 Moreover, $\sum_{v \in B} d_{G_1}(v) \le 2|E_B| \le 4\ell b$ by the definition of $2\ell$-core.
	 Then the sum of degree is at most $4\ell b + a b$. So
\end{enumerate}
\begin{equation*}
\begin{aligned}
	S &\le \frac{1}{2}(c+1)(2\Delta - c) + 4\ell b + a(2n-\Delta + 2\ell) + a b.\\
\end{aligned}
\end{equation*}

First, we view $a$ and $b$ as variables with respect to $a+b=2n-c$.
Then
\begin{equation}\label{eq: count S}
\begin{aligned}
	S & \le \frac{1}{2}(c+1)(2\Delta - c) + a(2n-c-a) + a(2n-\Delta + 2\ell) + 4\ell(2n-c-a)\\
	& \le \frac{1}{2}(c+1)(2\Delta - c) + 4\ell(2n-c) -a^2 + a(4n-\Delta -c- 2\ell).
	\end{aligned}
	\end{equation}
	Since $a<n$, we will end the proof by considering the following two subcases.

	\textbf{Subcase 1:} $(4n-\Delta -c- 2\ell)/2 \le n$.
	In this subcase, $S$ reaches the maximum when $a=(4n-\Delta-c-2\ell)/2$. Then we have
	\begin{equation*}
	\begin{aligned}
		S &\le \frac{1}{2}(c+1)(2\Delta - c) + 4\ell(2n-c) + {(4n-\Delta -c- 2\ell)}^2/4\\
		&= \frac{\Delta^{2}}{4} + \Delta \ell - 2 \Delta n + \Delta + \ell^{2} + 4 \ell n - \frac{c^{2}}{4} + c \left(\frac{3 \Delta}{2} - 3 \ell - 2 n - \frac{1}{2}\right) + 4 n^{2}.
	\end{aligned}
	\end{equation*}
If $0\le 3\Delta - 6\ell - 4n -1 $,
$S$ reaches the maximum when $c = 3\Delta - 6\ell - 4n -1$.
In this case,
\begin{equation*}
\begin{aligned}
	S & \le \frac{5 \Delta^{2}}{2} + \Delta \left(- 8 \ell - 8 n - \frac{1}{2}\right) + 10 \ell^{2} + 16 \ell n + 3 \ell + 8 n^{2} + 2 n + \frac{1}{4}.
\end{aligned}
\end{equation*}
Since the coefficient of $\Delta^2$ is positive, $S$ achieves the maximum when $\Delta = 2n$ or $\Delta = (4n+6\ell+1)/3$.
When $\Delta = (4n+6\ell+1)/3$, $S < 2n^2 +2n$.
When $\Delta = 2n$,
\begin{equation*}
\begin{aligned}
	S \le 2 n^{2} + n + \frac{1}{4} + 10 \ell^{2} + 3 \ell<2n^2 + 2n,
\end{aligned}
\end{equation*}
 when $n$ is large enough. Thus we have a contradiction  with (\ref{eq: sum of degrees lower bound}) in both cases.

If $3\Delta - 6\ell - 4n -1 < 0$, i.e. $\Delta>(6\ell+4n+1)$/3, by $(4n-\Delta-c-2\ell)/2 \le n$, we have $c \ge \frac{2}{3}n-4\ell-1$.
Thus, $S$ reaches the maximum when $c = \frac{2}{3}n-4\ell-1$.

\begin{equation*}
\begin{aligned}
	S & \le \frac{\Delta^{2}}{4} + \Delta \left(\ell - 2 n + 1\right) + \ell^{2} + 4 \ell n + 4 n^{2}-\frac{1}{4}\left(\frac{2}{3}n-4\ell-1\right)^2\\
	&+\left(\frac{2}{3}n-4\ell-1\right)\left(\frac{3\Delta}{2}-3\ell-2n-\frac{1}{2}\right).
\end{aligned}
\end{equation*}
Since the coefficient of $\Delta^2$ is positive, $S$ achieves the maximum when $\Delta = (6\ell + 4n +1)/3$ or $\Delta=n+\sqrt{n}$.
% When $\Delta = (6\ell + 4n +1)/3$, we have
% $$S\le \frac{5}{3}n^2+7\ell n< 2n^2 + 2n$$
%  when $n$ is large enough, a contradiction  with (\ref{eq: sum of degrees lower bound}).
% When $\Delta = n$, we have
% $$S\leq 65/36 n^2+10\ell n<2n^2+2n$$
% when $n$ is large enough, a contradiction  with (\ref{eq: sum of degrees lower bound}).
In both cases, we have $S < 2n^2 + 2n$ when $n$ is large enough, a contradiction  with (\ref{eq: sum of degrees lower bound}).

\textbf{Subcase 2:} $(4n-\Delta -c- 2\ell)/2 > n$.
 Then, $S$ reaches the maximum when $a= n$. By (\ref{eq: count S}), taking $c = \Delta -n-\frac{1}{2}-4\ell$, we have
\begin{equation*}
\begin{aligned}
	S & \le  -\frac{1}{2}c^2+\left(\Delta -n-\frac{1}{2}-4\ell\right)c+3n^2-\Delta n+8\ell n+\Delta-2\ell\\
	&\le \frac{1}{2}\left(\Delta -n-\frac{1}{2}-4\ell\right)^2+3n^2-\Delta n+8\ell n+\Delta-2\ell.
\end{aligned}
\end{equation*}
Since the coefficient of $\Delta^2$ is positive, $S$ achieves the maximum when $\Delta=2n$ or $\Delta=n+\sqrt{n}$.
One can verify that in both cases, we have $S < 2n^2 + 2n$ when $n$ is large enough, a contradiction  with (\ref{eq: sum of degrees lower bound}).
% When $\Delta=2n$, we have $S\le \frac{3}{2}n^2+10\ell n< 2n^2+2n$ when $n$ is large enough, a contradiction    with (\ref{eq: sum of degrees lower bound}).

% When $\Delta=n+\sqrt{n}$, we have $S\le \frac{1}{2}n^{1.2}+2n^2-n^{1.6}+10\ell n< 2n^2+2n$ when $n$ is large enough, a contradiction    with (\ref{eq: sum of degrees lower bound}).

 %In these two subcases, we have a contradiction  with (\ref{eq: sum of degrees lower bound}).

\textbf{Case 2:} $a \ge n$.

In this case, $b\le n$.
Let $Y_0= \{ v \in Y \mid d(v) \ge a + 1\}.$
Then for each vertex $v \in Y_0$, it corresponds to at least $d(v)-a-1$ edges in $E_B$.
Since $\beta \le n \le a$, the vertices in $Y_0$ have different degrees by the definition of $\beta$.
So there are at least $0 + 1 + \ldots + y_0 = \frac{y_0(y_0+1)}{2}$ edges in $E_B$ corresponding to the vertices in $Y_0$.
Therefore, we have $y_0 \le \sqrt{4\ell b}$.

Now we count the sum of degrees of vertices in $G$.
\begin{enumerate}
	\item The sum of degrees of vertices in $V(C) \cup \{u\}$ is at most $\frac{(c+1)(2\Delta - c)}{2}$.
	\item The sum of degrees of vertices in $A$ is at most $a(2n-\Delta + 2\ell)$.
	\item The sum of degrees of vertices in $Z$ is at most $2n |Z| \le 8\ell b^{0.1}n$ by $|Z| \le 4\ell b^{0.1}$.
	\item The sum of degrees of vertices in $Y_0$ is at most $ay_0 + 4\ell b$.
	\item The sum of degrees of vertices in $Y \setminus Y_0$ is at most $a + (a-1) + \ldots + \beta + \ldots + \beta$, which achieves the maximum when $\beta = n$.
	It is at most
	\begin{equation*}
		\left\{
	\begin{array}{ll}
		\frac{(a + n+1)(a-n)}{2} + n(y - y_0 - (a-n)) & \text{if } y-y_0 \ge a-n,\\
		\frac{(a + a - (y-y_0)+1)(y-y_0)}{2} & \text{if } y-y_0 < a-n.\\
	\end{array}\right.
	\end{equation*}
	The second formula is always no larger than the first one, so we only use the first one to get an upper bound of $S$.
\end{enumerate}

\begin{equation*}
\begin{aligned}
	S & \le \frac{(c+1)(2\Delta - c)}{2} + a(2n-\Delta + 2\ell) + 8\ell b^{0.1}n + 4 \ell b \\
	& \quad + a y_0 + \frac{(a+n+1)(a-n)}{2} + n (y-y_0-(a-n)).
\end{aligned}
\end{equation*}

The coefficient of $y_0$ is $(a-n) \ge 0$, so we can replace $y_0$ by ${(4\ell b)^{1/2}}$ to get an upper bound of $S$.
Then the terms about $b$ is $4\ell b + (a-n)(4\ell b)^{1/2} + 8\ell b^{0.1}n$.
Since $b \le n$, we can replace $b$ by $n$ to get an upper bound of $S$.
Since $y=b-z\le b$, we replace $y$ by $b = 2n-c-a$.

Then we have
\begin{equation*}
\begin{aligned}
	S & \le \frac{(c+1)(2\Delta - c)}{2} + a(2n-\Delta + 2\ell) + 8\ell n^{1.1}  \\
	& \quad + \frac{(a+n+1)(a-n)}{2} + n (3n-c-2a) + (a-n)(4\ell n)^{1/2}.
\end{aligned}
\end{equation*}

For the lower order terms, give a rough upper bound~($a -n \le n^{0.6}$):
\begin{equation*}
\begin{aligned}
	S & \le \frac{1}{2}c(2\Delta - c) + \frac{1}{2}(a+n+1)(a-n) + n (3n-c-2a) + a(2n-\Delta) + O(n^{1.1})\\
	& =  \frac{a^{2}}{2} - \frac{c^{2}}{2} - \Delta a + c(\Delta  - n) + \frac{5 n^{2}}{2} + O(n^{1.1})\\
	% & \le n(a-c) - \Delta a + c(\Delta  - n) + \frac{5 n^{2}}{2} + O(n^{1.1})\\
	& \le \frac{a^{2}}{2} - \Delta a + \frac{5 n^{2}}{2} + O(n^{1.1}).\\
\end{aligned}
\end{equation*}

When $a \in [n, n + n^{0.6}]$, it achieves the maximum on the endpoints.
In both cases, $S \le 2n^2 - n^{1.5} +O(n^{1.1})$.
Then we have $S \le 2n^2 + n$ when $n$ is large enough, a contradiction  with (\ref{eq: sum of degrees lower bound}).

Thus, we have finished the proof of the theorem.
Moreover, through the proof, there are many inequalities needs $n$ to be large enough.
One can easily check that these inequalities are satisfied when $n$ is larger than a polynomial of $\ell$.

\section{Conclusion}\label{Conclusion}

Chen and Ma \cite{CHEN20261} also obtained a similar result for graphs with even number of vertices.
\begin{theorem}[Chen and Ma~\cite{CHEN20261}]
	There exists an integer $n_0 >0$ such that the following holds for all $n \ge n_0$.
	The unique $2n$-vertex graph with at least $n^2-1$ edges, that does not contain two vertices of the same degree joined by a path of length three, is the complete bipartite graph $K_{n-1,n+1}$.
\end{theorem}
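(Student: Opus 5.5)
We would follow the same two-case scheme as in the proof of Theorem~\ref{thm: main 2n+1}, now on $2n$ vertices and with the path length specialised to three ($\ell=1$). Let $G$ have $2n$ vertices, $e(G)\ge n^2-1$, and no two equal-degree vertices joined by a path of length three. Then
\[
|\mathcal{M}|\le \binom{2n}{2}-n^2+1=n^2-n+1, \qquad S\ge 2n^2-2 .
\]
Define $\beta$ as before, as the largest degree shared by two vertices, and split according to whether $\beta\ge n$ or $\beta\le n-1$. The target extremal graph $K_{n-1,n+1}$ has repeated degrees $n+1$ and $n-1$, so the threshold between the two cases is $n$ rather than $n+1$. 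The key simplification for length three is that the Erdős–Gallai estimates (Theorem~\ref{thm: Erd\H{o}s and Gallai}) become exact emptiness statements. If $u_1,u_2$ have degree $\beta$, then any edge between $N(u_1)\setminus\{u_2\}$ and $N(u_2)\setminus\{u_1\}$ with distinct endpoints gives a path $u_1xyu_2$ of length three. Hence, in the notation of Section~\ref{sec: beta > n}, $G[B_{12}]$, $G[B_1,B_{12}]$, $G[B_2,B_{12}]$ and $G[B_1,B_2]$ are all empty, and the error terms $8\ell n$ disappear.

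In the case $\beta\ge n$, we first count missing edges inside $B\cup\{u_1,u_2\}$ exactly as in (\ref{eq: M[B]}). This gives roughly $(\beta-\alpha)^2-\binom{b_{12}+1}{2}$ missing edges. We then show that $D$ cannot compensate for this. Since paths now have length only three, the chains through $D$ used in Claims~\ref{claim: lower bound of gamma y}--\ref{claim: empty sets} are not available. Instead we would bound $e(G[D])+e(G[B,D])$ directly via the degree sequence. Every vertex of $B_{12}$ sends no edges into $B$, so its degree is at most $2+d$. Every vertex of $B_i$ has degree at most $b_i+d+1$. Combining this with the bound $|\mathcal{M}|\le n^2-n+1$ should force $d$ to be small and $b_{12}$ to be either $0$ or almost all of $B$. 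In the first situation, $B_1$ and $B_2$ are near-cliques. Then the pigeonhole argument of Claim~\ref{claim: lower bound of D} applies: three vertices of $B_1$ with a common degree and many common neighbours yield a path $v_1xyv_2$ inside $B_1$. In the second situation, $G$ is close to a complete bipartite graph with parts $B_{12}$ and $D\cup\{u_1,u_2\}$. A final exact degree-sum computation, in the spirit of the end of Section~\ref{sec: beta > n}, then gives $S\le 2(n-1)(n+1)$, with equality only for $K_{n-1,n+1}$.

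In the case $\beta\le n-1$, we mimic Section~\ref{sec: beta < n}. First, the analogue of Claim~\ref{claim: Delta > n + sqrt n} gives $\Delta\ge n+\sqrt n$, since otherwise $S<2n^2-2$. Take $u$ of maximum degree, and set $S_1=N(u)$ and $S_2=V\setminus N[u]$. The replacement for Claim~\ref{claim: different degrees in C} is the following observation. If $v_1,v_2\in S_1$ have equal degree and $v_1$ has a neighbour $w\in S_1\setminus\{v_2\}$, then $v_1wuv_2$ is a forbidden path. Similarly, if $v_1\in S_2$ has a neighbour $w\in S_1$ and $v_2\in S_1$ has equal degree, then $v_1wuv_2$ is forbidden. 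Consequently the vertices of $V\setminus A$, namely those with large degree and hence many neighbours in $S_1$, together with $u$, have pairwise distinct degrees. The remaining vertices have degree at most about $2n-\Delta$. The optimisation of $S$ over $(a,c,\Delta)$ is then the same quadratic computation as in Cases 1 and 2 of Section~\ref{sec: beta < n}, and should yield $S<2n^2-2$.

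We expect the main obstacle to be the exactness required in the $\beta\ge n$ case. The bound $n^2-1$ leaves no linear slack, so the off-by-one terms coming from $\alpha=\mathbf{1}_{u_1u_2\in E(G)}$, from $c=\beta-n-\alpha$ (now possibly equal to $-1$), and from the vertices of $D$ must all be tracked precisely. Moreover, the long-path constructions through $D'$ that Section~\ref{sec: beta > n} uses to make $G[D']$ empty are unavailable for length three. Our first attempt there would be to choose $u_1,u_2$ among the repeated-degree vertices so as to maximise $b_{12}$. We would then use equal degrees inside $D$: two such vertices with a common neighbour $t\in B_{12}$ and an edge from one of them to $N(t)$ give a length-three path. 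This should replace the missing structural claim.
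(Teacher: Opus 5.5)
The paper does not prove this statement. It only quotes it from Chen and Ma, and the authors explicitly leave even the even-order version of their own method as future work. So your proposal has to stand on its own, and it does not. The case $\beta\ge n$ contains the extremal graph and has zero slack, yet it is left as a plan, and that plan misses the essential obstruction of the even case.

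Take $K_{n,n}$ minus an edge $pq$. It has $n^2-1$ edges and $\beta=n$. Choose $u_1,u_2$ of degree $n$ on the same side, which is exactly what maximising $b_{12}$ does. Then $\alpha=0$, $B_1=B_2=\emptyset$, $b_{12}=n$, $d=n-2$, all four emptiness conditions hold, $|\mathcal M|=n^2-n+1$ exactly, and $S=2n^2-2$. The graph is bipartite with $D\cup\{u_1,u_2\}$ on one side and $B_{12}$ on the other. Hence no path of length three joins two vertices inside $D$, two inside $B_{12}$, or $u_1$ and $u_2$. The only violations are cross pairs, e.g.\ $x\in B_{12}$ and $w\in D$ of degree $n$ joined by $xu_1yw$. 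So neither the missing-edge count nor any device based on pairs within one class can finish.

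Your announced final step, a degree-sum computation in the spirit of the end of Section~\ref{sec: beta > n} giving $S\le 2(n-1)(n+1)$ with equality only for $K_{n-1,n+1}$, cannot be right as stated. That computation yields $S\le 2x_2(x_1+2)$ with $x_1+x_2+2=2n$, whose maximum $2n^2$ is attained in the balanced case. With $2n+1$ vertices the maximiser is automatically the unbalanced pair $\{n,n+1\}$. With $2n$ vertices you need an extra argument that uses equal degrees \emph{across} the near-bipartition to rule out $b_{12}=n$ and force $b_{12}=n+1$. Nothing in the proposal supplies it.

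Several intermediate claims are also off.
\begin{itemize}
\item The claim that $|\mathcal M|\le n^2-n+1$ forces $d$ to be small fails for $K_{n-1,n+1}$ itself: with $u_1,u_2$ on the $(n-1)$-side, $b_{12}=n+1$ and $d=n-3$. The dichotomy $b_{12}\approx 0$ or $b_{12}\approx b$ is asserted without argument.
\item The tool that controls edges near $D$ is equal degrees in $B_{12}$, not in $D$. You observed that $B_{12}$ sends no edges into $B$, so $d(v)=2+d_D(v)$ there. Hence two vertices $v_1,v_2\in B_{12}$ with equal $d_D$ have equal degree. This forces $\alpha=0$ (otherwise $v_1u_1u_2v_2$ is a forbidden path) and forbids any edge between $N_D(v_1)$ and $N_D(v_2)$, which is the exact form of (\ref{eq: M[Y]}).
\item In the case $\beta\le n-1$, your two observations only treat pairs with a vertex in $S_1$. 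Two vertices of $S_2$ are both at distance two from $u$, so the claim that all of $V\setminus A$ has pairwise distinct degrees is unjustified.
\end{itemize}

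The last point is easily repaired without that claim. Put $t=\Delta-n\ge\sqrt n$. Since $\beta\le n-1$, each degree in $[n,\Delta-1]$ occurs at most once. Vertices of $S_1$ with degree in $[2n-\Delta+2,n-1]$ have at least two neighbours in $S_1$, hence distinct degrees by your first observation. Bound the remaining vertices of $S_1$ by $2n-\Delta+1$ and those of $S_2$ by $n-1$. This gives $S\le 2n^2-t(n-t)+2<2n^2-2$, which is not the computation of Section~\ref{sec: beta < n} that you cite.
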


Liu and Zeng~\cite{2025arXiv250500523L} improved the bound to $n \ge 3$.
And later, Liu and Zeng also extended the problem on paths of length five to graphs with even number of vertices.

\begin{theorem}[Liu and Zeng~\cite{2026arXiv260411664L}]
	Let $n \ge 13$ be an integer. The unique $2n$-vertex graph with at least $n^2-1$ edges, that does not contain two vertices of the same degree joined by a path of length five, is the complete bipartite graph $K_{n-1,n+1}$.
\end{theorem}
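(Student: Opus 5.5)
We would follow the same two-case scheme used above for Theorem~\ref{thm: main 2n+1}, now with $2n$ vertices, $\ell=2$ (so the forbidden object is a $P_6$ joining two vertices of equal degree), and target graph $K_{n-1,n+1}$. Throughout, $G$ is a $2n$-vertex graph with $e(G)\ge n^2-1$ having no such $P_6$. The basic counts then become $S=2e(G)\ge 2n^2-2$ and $|\mathcal{M}|\le \binom{2n}{2}-(n^2-1)=n^2-n+1$. As before, the pigeonhole principle gives two vertices of equal degree, and we let $\beta$ be the largest repeated degree. Note that in $K_{n-1,n+1}$ the repeated degrees are $n+1$ and $n-1$. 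The natural threshold between the two cases is therefore around $\beta\approx n$, and the case split is $\beta\ge n+1$ versus $\beta\le n$.

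\emph{Case $\beta\ge n+1$.} Take $u_1,u_2$ of degree $\beta$ and define $B_1,B_2,B_{12},D$ exactly as in Section~\ref{sec: beta > n}. Any path of length $3$ (resp.\ $4$) between neighbourhoods would close a $P_6$ between $u_1,u_2$. Hence $G[B_{12}]$, $G[B_1,B_{12}]$, $G[B_2,B_{12}]$, $G[B_1,B_2]$ are $P_4$-free, and by Theorem~\ref{thm: Erd\H{o}s and Gallai} each has $O(n)$ edges. The missing-edge accounting (\ref{eq: M[B]})--(\ref{eq: M[Y]}) then carries over with $n^2$ replaced by the new budget.
\begin{enumerate}[label=(\roman*)]
\item Show $d$ is large, using the analogue of Claim~\ref{claim: lower bound of D}.
\item Find two vertices of $B_{12}$ with equal $D$-degree and bound $c$, $\gamma$, $y_{12}$ as in Claim~\ref{claim: lower bound of gamma y}.
\item Prove the structural Claims~\ref{claim: B setminus B'}--\ref{claim: empty sets}: $B'$ is independent from $B$, and $G[D']$, $G[D_1]$, $G[D_1,B]$ are empty.
\end{enumerate}
The final degree-sum computation gives $S\le 2x_2(x_1+2)-2\alpha$ with $x_1+x_2+2=2n$. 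This is maximized at $\{x_2,x_1+2\}=\{n-1,n+1\}$ or $\{n,n\}$, and we must exclude the balanced split $K_{n,n}$. It is excluded because $K_{n,n}$ is $n$-regular and contains a $P_6$ between any two vertices on opposite sides. Equality analysis should then force $\alpha=0$, $b=b'$, and $G=K_{n-1,n+1}$.

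\emph{Case $\beta\le n$.} Take $u$ of maximum degree $\Delta$. First show $\Delta\ge n+\Omega(\sqrt n)$ by the degree-sum argument of Claim~\ref{claim: Delta > n + sqrt n}, using $S\ge 2n^2-2$. Define $A_1,A_2$ with thresholds $2n-1-\Delta+2\ell$, let $C$ be the $4$-core of the rest, and observe as in Claim~\ref{claim: different degrees in C} that $V(C)\cup\{u\}$ has distinct degrees. With $\ell=2$ the path built inside $C$ is very short, so the argument is easier. The two subcases $a<n$ and $a\ge n$ then proceed by the same quadratic optimisation in $a,c,\Delta$, and each yields $S<2n^2-2$.

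\emph{Main obstacle.} The main obstacle is that $\ell=2$ is small and the theorem claims an explicit threshold $n\ge 13$. Everywhere above we used $n\ge f(\ell)$ to absorb error terms like $O(n^{1.5})$ and $O(n^{1.1})$ and the $2n^{0.9}$ slack. For an explicit small $n_0$ these soft estimates must be replaced by exact counts, in the style of Liu and Zeng's earlier $P_4$ argument. I would first prove the statement for $n\ge n_1$ with the asymptotic method. I would then try to sharpen the missing-edge inequalities into exact ones by tracking $\alpha$ and parity, and exploiting that $P_4$-free graphs are disjoint stars and triangles, hence have at most $n'$ edges. This should push the threshold down. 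Any residual small values of $n$ would need a separate finite or case-based check.
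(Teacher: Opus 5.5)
The paper does not prove this statement. It is quoted from Liu and Zeng, and the paper's own method is stated only for $\ell\ge 3$, odd order $2n+1$ and $n\ge f(\ell)$; the even-order case is explicitly left as future work. So your plan has to stand on its own, and as written it has genuine gaps.

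\textbf{The case $\beta\le n$ cannot be opened by degree counting.} The analogue of Claim~\ref{claim: Delta > n + sqrt n} is false on $2n$ vertices. With $\beta=n$ and $\Delta=n+t$, the same count gives only
\[
S\le \sum_{k=n+1}^{n+t}k+n(2n-t)=2n^2+\binom{t+1}{2},
\]
which is compatible with $S\ge 2n^2-2$ for every $t\ge 0$. In the odd case, the lower bound $2n^2+2n$ beats $(2n+1)\beta\le 2n^2+n$ by $n$, and that surplus is what pushes $\Delta$ up. Here $2n\cdot n$ already exceeds $2n^2-2$, so there is no surplus. Concretely, $K_{n,n}$ (and every $n$-regular graph on $2n$ vertices) has $n^2$ edges and $\beta=\Delta=n$. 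So $K_{n,n}$ lives in this case, not in the case $\beta\ge n+1$ where you try to exclude it. Everything downstream in Section~\ref{sec: beta < n} rests on $\Delta\ge n+\sqrt n$: the bound $a\le n+n^{0.6}$, the endpoint choices in the quadratic optimisations, and the final estimate $S\le 2n^2-n^{1.5}+O(n^{1.1})$. The near-regular regime $\beta=n$, $\Delta=n+o(\sqrt n)$ therefore needs a genuinely new argument. It must use the path condition to join two of the many degree-$n$ vertices by a $P_6$; no degree count can replace it.

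\textbf{The stated threshold $n\ge 13$ is not reached.} The soft estimates only take effect for astronomically large $n$. Already the last display in the proof of Claim~\ref{claim: lower bound of D} needs $4n^{1.95}<0.3n^2$, i.e.\ $n>10^{22}$. So ``sharpen the inequalities, then check the residual $n$ separately'' is a research programme, not a proof.

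\textbf{The transfer of Section~\ref{sec: beta > n} to $\ell=2$ is not mechanical.} Some constructions need $\ell\ge 3$. One is the $Z_2$ argument in Claim~\ref{claim: lower bound of gamma y}. Another is the proof that $G[D_1]$ is empty, which for $\ell=2$ produces $u_1t_1w_1p_1p_2w_2t_2u_2$, a path of length $7$ rather than $5$.

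\textbf{The final count cannot force $\alpha=0$.} Take $K_{n-1,n+1}$ plus one edge $u_1u_2$ inside the smaller part. It has $n^2$ edges, and $\beta=n+2$ is attained exactly at $u_1,u_2$. There is no $P_6$ between $u_1$ and $u_2$, and the graph satisfies the conclusions of all the Case~1 claims. Yet $S=2n^2$ exceeds your bound $2x_2(x_1+2)-2\alpha=2n^2-4$. The step replacing $2\beta$ by $2(b-\alpha)$ is wrong: in fact $2\beta=2(b-b_2+\alpha)$. Such graphs can only be excluded through a second equal-degree pair. For example, two degree-$(n+1)$ vertices $x,x'$ of the smaller part are joined by $x\,y\,u_1u_2\,y'\,x'$. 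Your equality analysis never invokes such a pair.
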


We believe that our methods can be extended for graphs with even number of vertices as well for $\ell \ge 3$.
For the sake of simplicity, we leave it as a future work.

% -------------------------------------------------------------
\section*{Acknowledgement}
% M. Lu is supported by the National Natural Science Foundation of China (Grant 12171272 \& 12161141003).
This research is supported by the National Natural Science Foundation of China  (Grant 12571372).

\section*{Declaration of competing interest}
The authors declare that they have no known competing financial interests or personal relationships that could have appeared to influence the work reported in this paper.

\section*{Data availability}
No data was used for the research described in the article.

% ------------ bib part ---------------------


\begin{thebibliography}{1}

	\bibitem{CHEN20261}
	K.~Chen and J.~Ma.
	\newblock A problem of Erd{\H{o}}s and Hajnal on paths with equal-degree endpoints.
	\newblock \emph{Journal of Combinatorial Theory, Series B}, 179:1--18, 2026.
	
	\bibitem{erdos1991problems}
	P.~Erd\H{o}s.
	\newblock Problems and results in combinatorial analysis and combinatorial number theory.
	\newblock \emph{Graph theory, combinatorics, and applications}, 1:397--406, 1991.
	
	\bibitem{erdos1959maximal}
	P.~Erd\H{o}s and T.~Gallai.
	\newblock On maximal paths and circuits of graphs.
	\newblock \emph{Acta Math. Acad. Sci. Hungar.}, 10(3):337--356, 1959.
	
	\bibitem{2025arXiv250500523L}
	Z.~{Liu} and Q.~{Zeng}.
	\newblock {A complement of the Erd{\H{o}}s-Hajnal problem on paths with equal-degree endpoints}.
	\newblock \emph{arXiv e-prints}, arXiv:2505.00523, May 2025.
	
	\bibitem{2026arXiv260411664L}
	Z.~{Liu} and Q.~{Zeng}.
	\newblock {Paths of length five with equal-degree endpoints}.
	\newblock \emph{arXiv e-prints}, arXiv:2604.11664, 2026.
	
	\end{thebibliography}
\end{document}